\newtheorem{theorem}{Theorem}[section]
\newtheorem{lemma}[theorem]{Lemma}
\newtheorem{proposition}[theorem]{Proposition}
\newtheorem{remark}[theorem]{Remark}
\theoremstyle{definition}
\newcommand {\bZ}{\mathbb{Z}}
\newcommand {\bC}{\mathbb{C}}
\newcommand {\bP}{\mathbb{P}}
\newcommand{\cM}{{\mathcal M}}
\newcommand{\cO}{{\mathcal O}}
\newcommand{\GL}{\operatorname{GL}}
\newcommand{\SL}{\operatorname{SL}}
\newcommand{\PGL}{\operatorname{PGL}}
\newcommand{\im}{\operatorname{im}}
\def\p#1{{\bP^{#1}_{\bC}}}
\def\Ofa#1{{\cO_{#1}}}
\def\ga#1{{{\accent"12 #1}}}
\newcommand{\PP}{\mathbb{P}} 
\newcommand{\ZZ}{\mathbb{Z}} 
\newcommand{\CC}{\mathbb{C}}
\newcommand{\xycenter}[1]{\begin{center}\mbox{\xymatrix{#1}}\end{center}} 
\newcommand{\xlabel}[1]{ 
                        \label{#1} 
                        \ifthenelse{\boolean{xlabels}} 
                                   {\marginpar[\hfill{\tiny #1}]{{\tiny #1}}} 
                                   {} 
                       } 
\title[Moduli spaces related to tetragonal curves]{BIRATIONAL PROPERTIES OF SOME MODULI SPACES
RELATED TO TETRAGONAL CURVES OF GENUS $7$}
\subjclass[2000]{Primary 14H10; Secondary 14E08 14H45 14H50 14L24 14M20}
\keywords{}
\author[Ch. B\"ohning, H.-Chr. Graf von Bothmer, G.Casnati]{Ch. B\"ohning$\ ^{\dag}$, H.-Chr. Graf von Bothmer$\ ^{\dag}$ \and G. Casnati$\ ^{\ddag}$}
\thanks{$\ ^{\dag}$  Supported by the German Research Foundation (Deutsche Forschungsgemeinschaft (DFG)) through the Institutional Strategy of the University of G\"ottingen}
\thanks{$\ ^{\ddag}$ Supported by the framework of PRIN 2008 \lq Geometria delle variet\ga a algebriche e dei loro spazi di moduli\rq, cofinanced by MIUR
}
\date{\today}
\begin{document}

\maketitle

\begin{abstract}
Let $\cM_{7,n}$ be the (coarse) moduli space of smooth curves of genus $7$ with $n\ge0$ marked points defined over the complex field $\bC$. We denote by  $\cM^1_{7,n;4}$ the locus of points inside $\cM_{7,n}$ representing curves carrying a $g^1_4$. It is classically known that $\cM^1_{7,n;4}$ is irreducible of dimension $17+n$. We prove in this paper that $\cM^1_{7,n;4}$ is rational for $0\le n\le 11$.
\end{abstract}

\section{Introduction and notation}\xlabel{sIntroduction}
Throughout this paper we will work over the field $\bC$ of complex numbers and we will denote by ${\cM}_{g,n}$ the (coarse) moduli space of smooth curves of genus $g\ge2$ with $n\ge0$ marked points defined over $\bC$.

It is well known that ${\cM}_{g,n}$ is irreducible of dimension $3g-3+n$. It thus makes sense to deal with its birational properties, such as its rationality, unirationality, Kodaira dimension $\kappa({\cM}_{g,n})$ and so on. 

For instance, the problem of the rationality of ${\cM}_{g,n}$ has been object of intensive study in the last 30 years (and not only), giving rise to a very long series of papers both in the unpointed and in the pointed case. More precisely, it is known that ${\cM}_{g,n}$ is rational if either $2\le g\le6$ and $n=0$ (see respectively \cite{Ig}, \cite{Ka4}, \cite{SB1} and \cite{Ka2}, \cite{Ka3}, \cite{SB2}), or $1\le g\le 6$ and $1\le n\le \sigma(g)$ (see \cite{Be}, \cite{C--F}, \cite{B--C--F}: in this last paper the numerical function $\sigma(g)$ is defined), while $\kappa({\cM}_{g,n})\ge0$ when $g=1,4,5,6$ and $n\ge\tau(g)$ (see again \cite{Be} and \cite{Lg}: for the definition of $\tau(g)$ see \cite{B--C--F}).

When $g\ge7$ the picture is much more complicated. It is known that ${\cM}_{g,n}$ is unirational for $7\le g\le 14$ and $0\le n\le\sigma(g)$ (see \cite{A--C1}, \cite{A--S}, \cite{Sn}, \cite{C--R}, \cite{Ve}, \cite{Lg} and again \cite{B--C--F}, only to quote the most recent references), while $\kappa({\cM}_{g,n})\ge0$ when either $7\le g\le 14$ and $n\ge\tau(g)$ (see \cite{Lg}, \cite{Fa3}) or $g\ge22$ without restrictions on $n$ (see \cite{H--M}, \cite{Hr}, \cite{E--H}, \cite{Fa1}, \cite{Fa2}, \cite{Fa4}).

Nevertheless ${\cM}_{g,n}$ turns out to contain many interesting (uni)rational loci defined in terms of existence of particular linear series. E.g., one can consider the natural {\sl gonality stratification}\/
$$
\cM_{g,n;2}^1\subseteq\cM_{g,n;3}^1\subseteq\cM_{g,n;4}^1\subseteq\dots\subseteq\cM_{g,n;{\left\lceil\frac{g+2}{2}\right\rceil}}^1=\cM_{g,n}
$$
where $\cM^1_{g,n; d}$ is the locus of $d$--gonal pointed curves, i.e.
$$
\cM^1_{g,n; d} := \{\ (C,p_1,\dots,p_n) \in \cM_{g,n}\ \vert\ \text{$C$ is endowed with a $g^1_d$}\  \}\, .
$$
Such loci are irreducible of dimension $2g+2d-5+n$ when $d<\lceil (g+2)/2\rceil$ (see \cite{A--C1} and the references therein for $n=0$. When $n\ge1$ the irreducibility of $\cM^1_{g,n; d}$ is part of folklore: for an easy proof of this fact see \cite{CsTrigPoint}). In particular, the general point of $\cM^1_{g,n; d}$ does not lie in $\cM^1_{g,n; d-1}$. It follows that the general point of $\cM^1_{g,n; d}$ carries a base--point--free $g^1_d$ and no $g^1_{d-1}$.

The unirationality of these loci is classically known for $d\le 5$ (see e.g. \cite{A--C1} and the references therein: see also \cite{Sch1}). 
Thus it is quite natural to ask whether the strata of such a stratification are rational. 

The rationality of {\sl hyperelliptic stratum}  $\cM_{g,n;2}^1$  was proved in \cite{Ka1}, \cite{Ka5} and \cite{Bo--Ka} when $n=0$ and in \cite{CsHypPoint} when $1\le n\le 2g+8$. In the case $n=0$, the proof essentially rests on the well--known equivalence between hyperelliptic curves of genus $g$ and $(2g+2)$--tuples of unordered points on in $\p1$ up to projective equivalence. 

Some other general rationality results are known for the {\sl trigonal stratum}  $\cM_{g,n;3}^1$. In this case the rationality is known when either $n=0$ and $g\equiv 2\pmod4$ (see \cite{SB1}) or $n=0$ and $g$ is odd (see the recent paper \cite{Ma}) or $1\le n\le 2g+7$ and every $g$ (see \cite{CsTrigPoint}): in all the cases the key point is the classical representation of a trigonal curve as a trisecant divisor on an embedded ruled surface.

Thus the birational description of  $\cM_{7,n;4}^1$ acquires a particular interest for two different reasons. On the one hand,  $\cM_{7,n;4}^1$ has codimension $1$ inside $\cM_{7,n}$, thus its rationality can be viewed  as a sort of \lq\lq suggestion\rq\rq\ of the possible rationality of $\cM_{7,n}$. On the other hand, it is the first necessary step for the possible proof of the rationality of the {\sl tetragonal stratum} $\cM_{g,n;4}^1$ of the aforementioned gonality stratification.

The main result of  the present paper is the  following theorem.

\begin{theorem}
\xlabel{mainTh}
The tetragonal locus $\cM_{7,n;4}^1$ is rational for $0\le n\le 11$.
\end{theorem}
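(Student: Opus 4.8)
The plan is to produce an explicit birational model of $\cM_{7,n;4}^1$ as a quotient of a vector space by a connected linear algebraic group, and then to prove rationality of that quotient. I would start with $n=0$. Since $\rho(7,1,4)=-1$, the general tetragonal curve of genus $7$ carries a \emph{unique} base--point--free $g^1_4$ and no $g^1_3$, so $\cM_{7,0;4}^1$ is birational to the space of pairs $(C,g^1_4)$, equivalently to the space of degree--$4$ Gorenstein covers $\pi\colon C\to\bP^1$. To such a cover I would apply the Casnati--Ekedahl structure theorem. For the general cover the Tschirnhausen bundle is balanced, giving $E=\cO(3)^{\oplus2}\oplus\cO(4)$ (of degree $g+3=10$) together with a rank--$2$ bundle $F=\cO(5)^{\oplus2}$ satisfying $\det F=\det E$, so that $C\subset\bP(E)$ is cut out fibrewise by the pencil of quadrics given by a map $\varphi\colon F\to\operatorname{Sym}^2E$. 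Thus $(C,g^1_4)$ is encoded by a point of $V:=\operatorname{Hom}_{\bP^1}(F,\operatorname{Sym}^2E)$, and two points yield isomorphic data exactly when they differ by the group $G$ of automorphisms of the pair $(E,F)$ lying over automorphisms of $\bP^1$, modulo the one--dimensional subgroup of scalars acting trivially on $V$. Here $V\cong\cO(1)^{\oplus6}\oplus\cO(2)^{\oplus4}\oplus\cO(3)^{\oplus2}$ has $\dim V=32$, while $\dim G=\bigl(h^0(\operatorname{End}E)+h^0(\operatorname{End}F)\bigr)+\dim\operatorname{PGL}_2-1=(9+4)+3-1=15$; the difference $32-15=17$ matches $\dim\cM_{7,0;4}^1$, confirming that $\cM_{7,0;4}^1$ is birational to $V/G$.

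To incorporate the $n$ marked points I would work with the universal tetragonal curve $\cC\subset\bP(E)\times V^\circ$ over the open locus $V^\circ\subset V$ of data defining a smooth curve with generic splitting type, and identify $\cM_{7,n;4}^1$ birationally with $(\cC^{\times_{V^\circ}n})/G$, the $n$--fold fibre power of $\cC$ modulo the induced $G$--action, since an ordered $n$--tuple of marked points on $C$ is exactly a point of the fibre of $\cC^{\times n}$ over the corresponding $\eta\in V^\circ$. The aim is then to present this fibre power, $G$--equivariantly and birationally, as the total space of a vector bundle over a rational $G$--variety, so that the no--name lemma reduces the rationality of $(\cC^{\times n})/G$ to that of $V^\circ/G\cong\cM_{7,0;4}^1$ together with a split affine factor. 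Concretely one can coordinatize a point of $C$ by its image $t\in\bP^1$ under $\pi$ and its residual position in the fibre $\pi^{-1}(t)$, organizing the $n$ points via the sections of an auxiliary bundle on $\bP^1$; the hypothesis $n\le11$ should be precisely the range in which the relevant evaluation maps remain surjective and the tuples stay general, keeping the construction inside the locus where the no--name reduction applies.

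The core of the argument is the rationality of $V^\circ/G$. The difficulty is that $G$ is not reductive: the splitting $E=\cO(3)^{\oplus2}\oplus\cO(4)$ forces $\operatorname{Aut}(E)$ to be a parabolic subgroup of $\operatorname{GL}_3$ with Levi factor $\operatorname{GL}_2\times\operatorname{GL}_1$ and abelian unipotent radical $H^0(\cO(1))^{\oplus2}\cong\CC^4$ arising from $\operatorname{Hom}(\cO(3),\cO(4))=\cO(1)$. I would first verify that $G$ acts generically freely on $V^\circ$ (the stabilizer of a general point injecting into $\operatorname{Aut}(C)=1$), so that orbits have the expected dimension $15$. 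Then I would strip the group down by the slice method: choosing a well--adapted linear slice $S\subset V$ meeting the general orbit in a single orbit of a smaller subgroup, one kills the unipotent radical and successively the $\operatorname{GL}$--factors through repeated applications of the no--name lemma. This is expected to leave a quotient of a rational variety by an extension of $\operatorname{PGL}_2$ by a torus, whose rationality follows from the rationality of torus quotients of rational varieties together with the classical rationality of $\operatorname{PGL}_2$--quotients arising from binary forms.

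The main obstacle is exactly this last reduction: carrying out the equivariant dismantling of the non--reductive group $G$ while preserving an explicit rational structure, and checking that the terminal small quotient is rational. Producing a slice that simultaneously tames the unipotent radical and the reductive Levi part, and confirming generic freeness throughout, is the delicate point; once $V^\circ/G$ is known to be rational, the extension to $0\le n\le11$ marked points via the universal curve and the no--name lemma should be comparatively routine.
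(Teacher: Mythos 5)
Your setup is genuinely different from the paper's, and the numerology in it is right: the paper never uses the Casnati--Ekedahl presentation of a degree-$4$ cover. Instead it represents the general tetragonal curve of genus $7$ by its unique very ample $g^3_8$ as the residual intersection, off a fixed line $L$, of a pencil of cubic surfaces in $\p3$, so that $\cM^1_{7,0;4}\approx G(2,16)/\PGL_{4,L}$; and for $n\ge1$ it uses plane septics with eight nodes, where the bound $n\le 11$ appears transparently as the fibre dimension $11-n$ of the relevant projective bundle. Your counts ($\dim V=32$, $\dim G=15$, difference $17$) are correct, and identifying $\cM^1_{7,0;4}$ with $V^{\circ}/G$ is plausible once one checks that the generic Tschirnhausen bundle is balanced and that the only identifications of data are by bundle automorphisms over $\PGL_2$.

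The genuine gap is that essentially all of the content of the theorem is concentrated in the step you defer: proving that $V^{\circ}/G$ is rational. Saying you would \lq\lq strip the group down by the slice method\rq\rq\ and that this \lq\lq is expected to leave\rq\rq\ a tractable quotient is not an argument; it is exactly where the paper has to do all of its work. In the paper's model the analogous linear quotient is $R/G$ with $G=(\GL_2\times G_R')\ltimes U$, and the reduction requires computing the stabilizer in general position of $\CC^2\otimes\operatorname{Sym}^2\CC^2\otimes\CC^2$ (a finite Heisenberg group $\tilde H$), its normalizer (generated by a copy of $\tilde{\mathfrak{S}}_4\ltimes\tilde H$ and the centre of $G_R$), the decomposition of the Jordan--H\"older quotients as $\mathfrak{S}_4$-representations, and a verification of generic freeness of the reduced action that is done by computer algebra. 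There is no reason to expect your $G$, a different non-reductive group with unipotent radical $\CC^4=H^0(\operatorname{Hom}(\cO(3),\cO(4)))^{\oplus 2}$, to dismantle any more easily; in particular the terminal quotient need not reduce to binary-forms invariant theory, a nontrivial stabilizer in general position may well appear, and the generic-freeness claims needed at each application of the no-name lemma are unverified. The pointed case is likewise only sketched: your guess that $n\le 11$ is where \lq\lq evaluation maps remain surjective\rq\rq\ is not substantiated, whereas in the paper the bound is forced by the dimension of the linear system of plane septics with eight assigned nodes and $n-1$ assigned simple points. As it stands the proposal is a reasonable program, not a proof.
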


We now quickly describe the content of the paper. In the first section we prove Theorem \ref{mainTh} in the pointed case $n\ge1$. The proof is based on the classical representation of a general curve $C$ of genus $7$ as a plane septic $\overline{C}$ with $8$ nodes in general position. When $C$ is tetragonal, then the eight nodes of $\overline{C}$ are the base locus of a pencil of cubic curves whose residual ninth intersection again lies on $\overline{C}$. This remark is at the base of the construction of an easy birational model of $\cM_{7,n;4}^1$ as quotient of a suitable projective bundle modulo the action of an algebraic group acting on it. The rationality of such a quotient then follows from more or less standard classical results in invariant theory.

In Section \ref{sRationalityPointed} we recall some introductory facts about $g^1_4$ on a curve of genus $7$. In particular, we show that a curve $C$ of genus $7$ represents a general point of the tetragonal stratum if and only if it carries a unique $g^3_8$ which is also very ample. 

In Section \ref{sProjectiveModels}, using such a very ample $g^3_8$, we are able to describe the general tetragonal curve $C$ of genus $7$ as the residual intersection of  a pencil of cubics having a common line. With the help of such an embedded geometric model and a lot of non--trivial representation theory, we are thus able to prove Theorem \ref{mainTh} in the case $n=0$ in Section \ref{sRationalityM704}.

\subsection{Notation}
We work over the complex field $\bC$. In particular, all the algebraic groups ($\GL_k$, $\SL_k$, $\PGL_k$ and so on) are always assumed to have coefficients in $\bC$.

If $g_1,\dots,g_h$ are elements of a certain group $G$ then $\langle
g_1,\dots,g_h\rangle$ denotes the subgroup of $G$ generated by $g_1,\dots,g_h$.

If $V$ is a vector space, then we denote by $\bP(V)$ the corresponding projective space. In particular we set  $\p n:=\bP(\bC^{\oplus n+1})$. 

We denote isomorphisms by $\cong$ and birational equivalences by $\approx$.

For other definitions, results and notation we always refer to \cite{Ha}.

\section{The rationality of ${\cM}_{7,n;4}^1$, $1\le n\le11$}\xlabel{sRationalityPointed}
Let us consider $X_n:=(\p2)^{n-1}\times X$, where $X\subseteq S^8\p2$ is the set of unordered $8$--tuple of points $N:=N_1+\dots+N_8$ in $\p2$ which are base loci of pencils of cubics in the plane. $X_n$ is open and dense in $(\p2)^{n-1}\times S^8\p2$. The incidence variety
\begin{align*}
\bP_n:=\{\ &(\overline{C}, A_1,\dots,A_{n-1},N_1+\dots+N_8)\in\vert\Ofa{\p2}(7)\vert\times X_n\ \vert\ \\
&\text{$N_i$ is double on $\overline{C}$, $A_j\in\overline{C}$, the pencil of cubics through $N_1,\dots,N_8$}\\
 &\text{ has its residual base point $A_{N_1+\dots+N_8}\in \overline{C}$}\ \}
\end{align*}
is naturally endowed with a  structure of projective bundle $p_n\colon\bP_n\to X_n$ with typical fiber $\p{11-n}$. Moreover, there is a natural action of $\PGL_3$ on $\bP_n$. 

Let us consider the general point $(\overline{C}, A_1,\dots,A_{n-1},N_1+\dots+N_8)\in\bP_n$. Let $\nu\colon C\to\overline{C}$ be the normalization map. $C$ is naturally endowed with an ordered set of $n$ points $p_j:=\nu^{-1}(A_j)$, $j\le n-1$, $p_{n}:=\nu^{-1}(A_{N_1+\dots+N_8})$. The curve $C$ cannot be hyperelliptic, since it is endowed with a $g^2_7$ (see \cite{C--M}, Proposition 2.2 (ii)). Thus, the pencil of cubics through $N_1,\dots,N_8$ cut out the fixed point $A_{N_1+\dots+N_8}\in \overline{C}$ plus a complete $g^1_4$ on $C$, say $\vert D\vert$. On the one hand, thanks to \cite{C--K}, Theorem 3.2, the curve $C$ does not carry any other $g^1_4$. On the other hand $\overline{C}$ is the image of $C$ via a morphism associated to
\begin{align*}
\vert \overline{C}\cdot\ell\vert&=\vert 4\overline{C}\cdot\ell-N_1-\dots-N_8-(3\overline{C}\cdot\ell-N_1-\dots-N_8-A_{N_1+\dots+N_8})-A_{N_1+\dots+N_8}\vert=\\
&=\vert K_C-D-p_n\vert\, ,
\end{align*}
i.e. $\overline{C}\subseteq\vert K_C-D-p_n\vert\check{\ }\cong\p2$.

In particular, we have a natural rational map
$$
m_n\colon {\bP}_n\dashrightarrow {\cM}_{7,n;4}^1\, .
$$
\begin{lemma}
\xlabel{lMapmn}
The map $m_n$ is dominant and it induces a birational isomorphism
$$
{\cM}_{7,n;4}^1\approx {\bP}_n/\PGL_{3}\, .
$$ 
\end{lemma}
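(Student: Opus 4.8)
The plan is to establish the birational isomorphism by verifying that $m_n$ is dominant and that its fibres are precisely the $\PGL_3$--orbits, which then yields the identification $\cM_{7,n;4}^1 \approx \bP_n/\PGL_3$ on the level of function fields. I would first check dominance. By the dimension count in the introduction, $\dim \cM_{7,n;4}^1 = 17+n$. On the other side, $\dim X_n = 2(n-1) + \dim X$; since $X$ is dense in $S^8\p2$ we have $\dim X = 16$, and the fibres of $p_n$ are $\p{11-n}$, so $\dim \bP_n = 2(n-1) + 16 + (11-n) = 25 + n$. Since $\dim\PGL_3 = 8$, a generic orbit being $8$--dimensional would give $\dim \bP_n - \dim\PGL_3 = 17 + n = \dim \cM_{7,n;4}^1$, matching exactly. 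So dominance will follow once I show the generic fibre of $m_n$ has dimension $8$ and coincides with a $\PGL_3$--orbit.

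Next I would verify that $m_n$ is $\PGL_3$--invariant: an element $\sigma\in\PGL_3$ acts on the plane, carrying $(\overline C, A_1,\dots,A_{n-1}, N_1+\dots+N_8)$ to the configuration with each object transported by $\sigma$. The normalization $C$, the marked points $p_j$, and the induced linear series are all intrinsic to the abstract pointed curve and are unchanged by this plane automorphism, so $m_n\circ\sigma = m_n$. Thus $m_n$ factors through $\bP_n/\PGL_3$, giving a rational map $\overline{m_n}\colon \bP_n/\PGL_3 \dashrightarrow \cM_{7,n;4}^1$. The heart of the argument is to show this factored map is birational, which I would do by constructing a rational inverse. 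Given a general pointed tetragonal curve $(C,p_1,\dots,p_n)$, the constructions recalled just before the lemma are canonical: $C$ carries a unique $g^1_4$, say $\vert D\vert$ (uniqueness from \cite{C--K}, Theorem 3.2), and the residual series $\vert K_C - D - p_n\vert$ is a $g^2_7$ embedding $C$ as a plane septic $\overline C$ with eight nodes; the nodes are the base locus $N_1+\dots+N_8$ of a pencil of cubics, and $p_n$ is recovered as the preimage of the residual ninth base point $A_{N_1+\dots+N_8}$. This recovers a point of $\bP_n$ from $(C,p_1,\dots,p_n)$, well defined up to the choice of projective coordinates on the target $\p2$, i.e. up to $\PGL_3$.

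The main obstacle I anticipate is checking that the fibre of $m_n$ through a general point is exactly one $\PGL_3$--orbit, and not a larger set; equivalently, that the inverse construction above is genuinely inverse to $m_n$. This amounts to two verifications. First, that the plane model $\overline C$ together with the configuration of nodes and marked points is reconstructed canonically from the abstract data $(C,p_1,\dots,p_n)$, up to a projectivity of the target plane: this rests on the uniqueness of the $g^1_4$ and on the fact that the embedding series $\vert K_C - D - p_n\vert$ is forced, so the only freedom is the choice of basis of $H^0(K_C - D - p_n)$, which is precisely the $\PGL_3$ ambiguity. Second, that the generic stabilizer in $\PGL_3$ is trivial, so that the orbit really has dimension $8$ and no two distinct points of a fibre are identified by a projectivity fixing the data; this follows because a general eight--nodal septic has no nontrivial automorphisms preserving the node configuration. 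Granting these, $m_n$ separates $\PGL_3$--orbits generically, so $\overline{m_n}$ is injective on a dense open set and, being dominant between irreducible varieties of the same dimension, is birational. This gives the desired isomorphism $\cM_{7,n;4}^1 \approx \bP_n/\PGL_3$.
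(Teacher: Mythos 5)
Your proposal is correct and follows essentially the same route as the paper: identify the fibres of $m_n$ with $\PGL_3$--orbits via the uniqueness of the $g^1_4$ and the forced embedding series $\vert K_C-D-p_n\vert$, then conclude dominance (and hence birationality of the induced map) from the dimension count $\dim\bP_n-\dim\PGL_3=25+n-8=17+n=\dim\cM^1_{7,n;4}$. The only cosmetic difference is that you phrase the fibre description as the construction of a rational inverse and separately flag the triviality of the generic stabilizer, whereas the paper lets the latter fall out of the sandwich of dimension inequalities.
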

\begin{proof}
We first describe the fibers of $m_n$. If 
$$
(\overline{C'}, A_1',\dots,A_{n-1}',N_1'+\dots+N_8'),\qquad(\overline{C''}, A_1'',\dots,A_{n-1}'',N_1''+\dots+N_8'')
$$
have the same image $(C,p_1,\dots,p_n)$, then there is a birational map $\varphi\colon \overline{C'}\dashrightarrow\overline{C''}$. The corresponding automorphism on $C$ must fix $K_C$, the unique $g^1_4$ and the points $p_1,\dots,p_n$. Thus $\varphi$ is induced by a projectivity of $\vert K_C-D-p_n\vert\check{\ }\cong\p2$.  It follows that the fibers of $m_n$ are exactly the orbits of the action of $\PGL_3$ on ${\bP}_n$. 

In particular ${\bP}_n/\PGL_{3}\approx\im(m_n)\subseteq{\cM}_{7,n;4}^1$. In order to complete the proof it suffices to check that $\dim(\im(m_n))=\dim({\bP}_n/\PGL_{3})=\dim({\cM}_{7,n;4}^1)$. We have
$$
17+n=\dim({\cM}_{7,n;4}^1)\ge\dim(\im(m_n))\ge \dim({\bP}_n)-\dim(\PGL_{3})=25+n-8=17+n\, ,
$$
thus $\dim(\im(m_n))=17+n=\dim({\cM}_{7,n;4}^1)$. It follows that  $\im(m_n)$ is dense inside ${\cM}_{7,n;4}^1$.
\end{proof}

We now go to prove the Theorem \ref{mainTh} for $1\le n\le 11$ making use of the aforementioned representation. We first examine the case $5\le n\le 11$ which is the easiest one. To this purpose let $E_1:=[1,0,0]$, $E_2:=[0,1,0]$, $E_3:=[0,0,1]$, $E_4:=[1,1,1]$ and consider the subset
$$
Y_n:=\{\ (E_1,E_2,E_3,E_4,A_5,\dots,A_{n-1},N_1+\dots+N_8)\in X_n\ \}\, .
$$
It is trivial to check that $Y_n$ is a $(\PGL_3, \: \mathrm{id})$--section of $X_n$ in the sense of \cite{Ka1}. The scheme $p_n^{-1}(Y_n)$ is a projective bundle on $Y_n$, thus it is trivially irreducible and rational. It follows from Proposition 1.2 of \cite{Ka1} that
$$
{\cM}_{7,n;4}^1\approx{\bP}_n/\PGL_3\approx p_n^{-1}(Y_n)\, ,
$$
thus ${\cM}_{7,n;4}^1$ is also rational for $5\le n\le11$.

Now we turn our attention to the slightly more difficult case $1\le n\le 4$. Let 
\begin{equation*}
{\mathcal E}:=\{\ (E, A)\in\vert\Ofa{\p2}(3)\vert\times \p2\ \vert\ A\in E\ \}\, .
\end{equation*}
The projection $\mathcal E\to \p2$ endows $\mathcal E$ with a natural structure of Zariski locally trivial projective bundle over $\p2$ with fiber isomorphic to $\p8$. Thus $\mathcal E$ is rational and  $\dim(\mathcal E)=10$. There exists a natural dominant rational map $q_n\colon {\bP}_n\to\mathcal E$ defined by
$$
(\overline{C}, A_1,\dots,A_{n-1},N_1+\dots+N_8)\mapsto(E_{N_1+\dots+N_8},A_{N_1+\dots+N_8})\, ,
$$
where $E_{N_1+\dots+N_8}$ is the unique cubic tangent to $\overline{C}$ at $A_{N_1+\dots+N_8}$ through the points $N_1,\dots,N_8$. The fiber of $q_n$ over $(E,A)$ is birationally isomorphic to $\p{10-n}\times(\p2)^{n-1}\times \vert\Ofa E(3E\cdot L-A)\vert$ (here $L$ is a general line in $\p2$). In particular, the map $q_n$ can be factorized into a sequence of Zariski locally trivial projective bundles.

Trivially $q_n$ is $\PGL_3$ equivariant and we have the following more or less classical result.

\begin{lemma}
\xlabel{almost free}
The action of $\PGL_3$ on ${\mathcal E}$ is almost free.
\end{lemma}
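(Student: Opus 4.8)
The plan is to prove that a generic point $(E,A)\in\mathcal E$ has trivial stabilizer under $\PGL_3$, which is precisely the assertion of almost freeness. By definition the stabilizer of $(E,A)$ is the subgroup of those $g\in\PGL_3$ that carry the cubic $E$ to itself and fix the point $A\in E$; I will write $\mathrm{Stab}(E)\subseteq\PGL_3$ for the stabilizer of the cubic alone, so that $\mathrm{Stab}(E,A)=\{\,g\in\mathrm{Stab}(E)\mid g(A)=A\,\}$.

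First I would show that $\mathrm{Stab}(E)$ is finite for a generic (hence smooth) cubic $E$. Two smooth plane cubics with the same $j$-invariant are projectively equivalent: an abstract isomorphism between them can be corrected by a translation so as to match the two hyperplane classes, using that multiplication by $3$ is surjective on the group of an elliptic curve. Hence the $\PGL_3$-orbits of smooth cubics are exactly the fibres of the $j$-map from the open set $U\subseteq\vert\Ofa{\p2}(3)\vert\cong\p9$ of smooth cubics to the one-dimensional $j$-line $\a1$. The generic orbit therefore has dimension $9-1=8=\dim\PGL_3$, so the generic stabilizer $\mathrm{Stab}(E)$ is zero-dimensional, i.e. finite.

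Next I would observe that every nonidentity $g\in\mathrm{Stab}(E)$ restricts to a nontrivial automorphism of the curve $E$. Indeed a smooth plane cubic is nondegenerate and contains four points in general position, so a projectivity fixing $E$ pointwise would fix four points in general position and hence be the identity of $\PGL_3$. Since a nontrivial automorphism of the genus-one curve $E$ has fixed locus a proper closed subset, it has only finitely many fixed points on $E$. As $\mathrm{Stab}(E)$ is finite, the union over its nonidentity elements of these fixed loci is a finite subset $Z\subset E$, and for every $A\in E\setminus Z$ one has $\mathrm{Stab}(E,A)=\{\mathrm{id}\}$.

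Letting $E$ range over the generic smooth cubics, the pairs $(E,A)$ with trivial stabilizer form a dense open subset of $\mathcal E$, which establishes almost freeness. The only delicate point is the finiteness of $\mathrm{Stab}(E)$; should one prefer an explicit route, one may instead invoke the classical description of the group of projective automorphisms of a general smooth plane cubic as the order-$18$ group $(\ZZ/3)^2\rtimes\ZZ/2$ generated by the translations by the nine $3$-torsion points together with the inversion $x\mapsto-x$, and then remark that a translation fixes no point while the inversion-type elements fix a point $A$ only when $A$ has order dividing $6$, so that a general $A\in E$ is fixed by no nontrivial automorphism.
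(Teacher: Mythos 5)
Your proof is correct, but your main line of argument is genuinely different from the paper's. The paper proceeds entirely explicitly: it puts a general cubic in Hesse normal form $t_1^3+t_2^3+t_3^3-3\lambda t_1t_2t_3=0$, quotes the classical fact that the projective stabilizer of such a cubic is the extended Heisenberg group $H(3)^e\cong(\ZZ/3\ZZ)^2\rtimes\ZZ/2\ZZ$ of order $18$, and then observes that the locus of points of $\p2$ fixed by some nonidentity element of $H(3)^e$ is a finite union of points and lines, hence meets each Hesse cubic in a proper closed subset. You instead obtain finiteness of $\mathrm{Stab}(E)$ by a soft dimension count (orbits of smooth cubics are fibres of the $j$-map, so the generic orbit is $8$-dimensional, matching $\dim\PGL_3$ -- the step that two smooth plane cubics with equal $j$-invariant are projectively equivalent, via surjectivity of multiplication by $3$, is the only place needing care and you handle it correctly), and then conclude by noting that a nonidentity element of $\mathrm{Stab}(E)$ acts nontrivially on $E$ (as a projectivity fixing four points in general position is the identity) and so has only finitely many fixed points on $E$. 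Your approach buys generality and avoids any normal form; the paper's buys an explicit identification of the generic stabilizer of the cubic and of the excluded locus $\mathcal A$. Your closing remark -- identifying the linear automorphism group of a general plane cubic as the order-$18$ group generated by translations by $3$-torsion and the inversion, with translations fixed-point-free and inversion-type elements fixing only $6$-torsion points -- is essentially the paper's argument restated in terms of the group law rather than Hesse coordinates, so you have in effect supplied both proofs.
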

\begin{proof}
It is a well--known classical result (see e.g. \cite{Brieskorn}, Theorem 4 of Section II.7.3) that, up to projectivities, the equation of each general cubic curve can be put in its Hesse form, i.e.
$$
t_1^3+t_2^3+t_3^3-3\lambda t_1t_2t_3=0\, ,
$$
where $\lambda\in\bC$ satisfies $\lambda^3\ne1$. The subgroup of projectivities fixing such a polynomial is the extended Heisenberg group
$$
H(3)^e:=\left\langle
\left(
\begin{array}{ccc}
0 & 1 & 0\\
0 &  0 & 1\\
1 & 0 &  0
\end{array}
\right),\left(
\begin{array}{ccc}
1 & 0 & 0\\
0 & \zeta & 0\\
0 & 0 & \zeta^2
\end{array}
\right),\left(
\begin{array}{ccc}
1 & 0 & 0\\
0 &  0 & 1\\
0 & 1 &  0
\end{array}
\right)\right\rangle\cong(\bZ_3\times\bZ_3)\rtimes\bZ_2
$$
(see Section II.7.3 of  \cite{Brieskorn}). Hence the set ${\mathcal A}:=\{\ A\in\p2\ \vert\ \exists \varphi\in H(3)^e,\ \varphi\ne\mathrm{id}:\ \varphi(A)=A\ \}$ is the union of a finite number of points and lines in $\p2$. Thus ${\mathcal P}:=\p2\setminus\mathcal A$ is open and dense. Moreover, $\mathcal P$ has non--empty intersection with each irreducible curve $E$ in the Hesse pencil. 

By construction, for each point $A\in \mathcal P\cap E$, the pair $(E,A)$ has trivial stabilizer inside $\PGL_3$. 
\end{proof}

We can draw the commutative diagram

\xycenter{
	\PP_n \ar[d]_{q_n} \ar@{-->}[r]
	& \PP_n /\PGL_3 \approx {\cM}_{7,n;4}^1 \ar[d]^{\bar{q}_n}
	\\
	\mathcal{E} \ar@{-->}[r]
	 & \mathcal{E}/\PGL_3. 
	}
Let
$$
\widehat{\mathcal{ E}}:=\{\ (f, x) \in \mathrm{Sym}^3 ( \CC^3)^{\vee } \times \CC^3 \ \vert\ f(x) =0\ \}\, .
$$
Thus $\mathcal{E}\cong\widehat{\mathcal{ E}}/T$ where $T:=(\bC^*)^2$ acts almost freely on $\widehat{\mathcal{ E}}$ via homotheties. The group $\SL_3$ acts naturally on $\widehat{\mathcal{ E}}$. Such an action induces on $\mathcal{E}$ the natural action of $\PGL_3$. It follows that the map $\widehat{\mathcal{ E}}/\SL_3\dashrightarrow \mathcal{E}/\SL_3\cong \mathcal{E}/\PGL_3$ has a section. Moreover $\SL_3$ is special and acts almost freely on $\widehat{\mathcal{ E}}$, thus also the natural projection $\widehat{\mathcal{ E}}\dashrightarrow\widehat{\mathcal{ E}}/\SL_3$ has a section. Composing these two sections with the natural projection $\widehat{\mathcal{ E}}\to\mathcal E$ we finally obtain a section $\sigma\colon \mathcal E/\PGL_3\dashrightarrow \mathcal{E}$ of the bottom map of the commutative square above.

Recall that the map $q_n$ is a sequence of Zariski locally trivial projective bundles, so from the existence of $\sigma$ it follows that $\PP_n/\mathrm{PGL}_3$ is a tower of Zariski locally trivial projective bundles over $\mathcal{E}/\mathrm{PGL}_3$ too. Since $\mathcal{E}/\mathrm{PGL}_3$  is rational by Castelnuovo's theorem, we conclude that the same is true also for $\bP_n/\PGL_3\approx{\cM}_{7,n;4}^1$.

\

Thus we have completed the proof of Theorem \ref{mainTh} for $1\le n\le 11$. In order to complete its proof, it remains to analyze the difficult case $n=0$. We devote the remaining part of this paper to the description of this case

\section{Projective models of tetragonal curves of genus $7$}\xlabel{sProjectiveModels}
Let $C$ be a curve of genus $g$. Assume the existence of two curves $C_i$ of genera $g_i$ and morphisms $\varphi_i\colon C\to C_i$ of respective degrees $d_i$, $i=1,2$. If $\varphi_1$ and $\varphi_2$ are not composed with the same pencil, then the following Castelnuovo--Severi formula holds true (see [A--C--G--H], Exercise VIII C 1):
\begin{equation}
\label{castelnuovo severi}
g\le (d_1-1)(d_2-1)+d_1g_1+d_2g_2\, .
\end{equation}

Assume that $C$ is a curve of genus $7$ carrying a base--point--free $g^1_4$, say $\vert D\vert$. Then $h^0\big(C,\Ofa C(2D)\big)\ge3$. We say that $\vert D\vert$ is of type $I$ if $h^0\big(C,\Ofa C(2D)\big)=3$, of type $II$ otherwise. Notice that $C$ cannot carry any $g^1_3$. Otherwise, Formula \ref{castelnuovo severi} would yield $7=g\le 6$, since  $\vert D\vert$ and the $g^1_3$ cannot be composed with the same pencil.
We have the following possible cases.
\begin{enumerate}
\item $C$ is hyperelliptic. Thanks to Formula \ref{castelnuovo severi} the linear system $\vert D\vert$ is composed with the involution: in particular, $C$ is endowed with infinitely many $g^1_4$'s.
\item $C$ is bielliptic. Again Formula \ref{castelnuovo severi} yields that the linear system$\vert D\vert$ is composed with the involution: in particular, $C$ is endowed with infinitely many $g^1_4$'s.
\item $C$ is neither hyperelliptic nor bielliptic, but it carries more than one $g^1_4$
\item $C$ carries exactly one $g^1_4$ of type $II$.
\item $C$ carries exactly one $g^1_4$ of type $I$.
\end{enumerate}

We now go to characterize the different types of curves in terms of existence of particular $g^3_d$ on them.

\begin{proposition}
\xlabel{plane model}
The curve $C$ carries either a finite number $t\ge2$ of $g^1_4$'s or exactly one $g^1_4$ of type $II$ if and only if $C$ is birationally isomorphic to a plane sextic with three, possibly infinitely near, double points.
\end{proposition}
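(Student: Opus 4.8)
The plan is to read off the $g^1_4$'s of the sextic directly from its double points for the ``if'' implication, and to reconstruct the sextic from a suitable $g^3_8$ for the ``only if'' implication; the relation $D_1+D_2+D_3\equiv K_C$ between the three pencils of lines through the nodes will be the backbone of both arguments.

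For the ``if'' implication, let $\nu\colon C\to\overline{C}\subseteq\p2$ be the normalization, let $E_1,E_2,E_3$ be the length--$2$ divisors lying over the three double points, and set $H:=\nu^*\Ofa{\p2}(1)$, a birationally very ample $g^2_6$. The pencil of lines through the $i$--th double point pulls back to the base--point--free $g^1_4$ $\ \vert D_i\vert:=\vert H-E_i\vert$, and adjunction yields $K_C\equiv 3H-E_1-E_2-E_3$, whence $D_1+D_2+D_3\equiv K_C$. Since $H$ is birationally very ample $C$ is not hyperelliptic; it carries no $g^1_3$ by \eqref{castelnuovo severi}; and it is not bielliptic either, for projecting $\overline{C}$ from one of its smooth points produces a base--point--free $g^1_5$ whose odd degree prevents it from being composed with a hypothetical degree--$2$ map $C\to E$ onto an elliptic curve, so that \eqref{castelnuovo severi} would give $7=g\le(2-1)(5-1)+2\cdot 1=6$, a contradiction. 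Hence $C$ carries only finitely many $g^1_4$'s. If the three double points are distinct then $\vert D_i\vert=\vert D_j\vert$ would force $E_i\equiv E_j$, impossible on a non--hyperelliptic curve, so the $\vert D_i\vert$ are pairwise distinct and $C$ carries a finite number $t\ge2$ of $g^1_4$'s; if instead the double points lie in a single infinitely near chain the three pencils coincide in one $g^1_4$ $\vert D\vert$, and $3D\equiv K_C$ gives $h^0(2D)=2+h^0(K_C-2D)=2+h^0(D)=4$, so $\vert D\vert$ is of type $II$. In either case $C$ lies in the asserted locus.

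For the converse I reconstruct the sextic from the residual $g^3_8$. If $C$ carries two distinct $g^1_4$'s $\vert D_1\vert,\vert D_2\vert$ then, since neither a $g^1_2$ nor a $g^1_3$ exists, they are not composed with a common pencil; the base--point--free pencil trick gives $h^0(D_1+D_2)=4$ and shows that $D_3:=K_C-D_1-D_2$ is a third $g^1_4$, so $\vert D_1+D_2\vert$ is a $g^3_8$ mapping $C$ onto a curve $\Gamma$ of bidegree $(4,4)$ on the Segre quadric $\p1\times\p1\subseteq\p3$. This map is birational onto $\Gamma$ (a cover of degree $2$ or $4$ would exhibit $C$ as hyperelliptic or bielliptic, or force $D_1\equiv D_2$), and $\Gamma$ has arithmetic genus $9$, hence $\delta$--invariant $2$; projecting $\Gamma$ from one of its two double points realizes $C$ as a plane sextic with three double points. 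If instead $C$ carries a single $g^1_4$ $\vert D\vert$ of type $II$, then $h^0(K_C-2D)\ge2$ forces $K_C-2D$ to be a $g^1_4$, whence $K_C\equiv 3D$ by uniqueness and $h^0(2D)=4$; now $\vert 2D\vert=\vert K_C-D\vert$ is a $g^3_8$ whose image lies on the rank--three quadric (a cone) cut out by the inclusion $\mathrm{Sym}^2H^0(D)\hookrightarrow H^0(2D)$, and the same projection yields a plane sextic whose three double points now include an infinitely near pair, reflecting the degeneration of the quadric to a cone.

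The main obstacle is the converse. One must check carefully that the image of the $g^3_8$ is a reduced, irreducible curve of the stated bidegree with $\delta$--invariant exactly $2$, that projection from a double point lands on a plane sextic having precisely three double points and no worse singularities (this is where the absence of a $g^1_3$, i.e.\ the non--existence of an ordinary triple point, is used), and that in the type $II$ case the degenerate quadric produces exactly one infinitely near pair. Controlling these singularities---and, on the ``if'' side, the clean exclusion of the bielliptic locus via the odd--degree $g^1_5$---is the technical heart of the proof.
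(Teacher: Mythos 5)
Your proposal is correct and follows essentially the same route as the paper: read the $g^1_4$'s off the double points of the sextic (with the type~$II$ case corresponding to infinitely near points), and conversely map $C$ to a quadric surface or quadric cone via the $g^3_8$ given by $\vert D_1+D_2\vert$ or $\vert 2D\vert$, project from a double point of the arithmetic-genus-$9$ image, and use Clebsch's formula together with the absence of a $g^1_3$ to rule out a triple point. The only cosmetic differences are that you exclude biellipticity by an explicit Castelnuovo--Severi argument with a $g^1_5$ where the paper cites Coppens--Martens, and your case split in the ``if'' direction should also cover the intermediate configuration of two distinct double points with one infinitely near companion (which still yields $t\ge 2$ by your own argument).
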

\begin{proof}
If $C$ is birationally isomorphic to a plane sextic $\overline{C}$ with at most double points as singularities, then $C$ is neither hyperelliptic nor bielliptic due to \cite{C--M}, Proposition 2.2 (ii) and (v).  If $\overline{C}$ has  at least two double points, then it is clear that $C$ is endowed with at least two $g^1_4$. Assume that $\overline{C}$ has a unique double point, say $N_1$. The arithmetic genus of a plane sextic is $10$, hence Clebsch's formula for the genus of a plane curve yields
$$
\sum_{p\in\overline{C}}{\mu_p(\overline{C})(\mu_p(\overline{C})-1)}=6,
$$
$\mu_p(\overline{C})$ being the multiplicity of $p\in \overline{C}$. I follows the existence of a second double point $N_2$ infinitely near to $N_1$. The linear system of conics through $N_1$ and $N_2$ has dimension $3$ cut out $\vert\Ofa C(2D)\vert$ on $\overline{C}$ outside $N_1$ and $N_2$. Thus $h^0\big(C,\Ofa C(2D)\big)\ge4$, i.e. $C$ carries a $g^1_4$ of type $II$ in this case.

Conversely, each pair of distinct $g^1_4$ on $C$ induces a morphism $\varphi\colon C\to\p1\times\p1\subseteq \p3$. Its image $\overline{C}\subseteq\p1\times\p1$ is a curve whose bidegree is necessarily either $(2,2)$ or $(4,4)$. In the first case $\varphi$ has degree $2$, whence $C$ would be hyperelliptic, a contradiction, since $C$ carries a finite number of $g^1_4$. In the second case $\varphi$ would be birational, whence $\overline{C}$ would carry at least a double point. Projecting on a plane from such a double point we obtain a plane sextic birationally isomorphic to $C$. Clebsch's formula for the genus of a plane curve of degree $6$ implies that $\overline{C}$ has either a triple point or it carries three, possibly  infinitely near, double points. If $\overline{C}$ carries a triple point, then it would be endowed with a $g^1_3$, a contradiction. We conclude that the singularities of $\overline{C}$ are at most double points.

Finally assume that $C$ is endowed with a unique $g^1_4$ of type $II$, say $\vert D\vert$. In this case, $\vert 2D\vert$ is a $g^r_8$ with $r\ge3$. Each such $g^r_8$ is necessarily special, thus Clifford's Theorem yields $r\le4$. If equality holds then $C$ would be hyperelliptic, thus it would carry infinitely many $g^1_4$, a contradiction. It follows that $r=3$. In particular, if $f_1,f_2\in H^0\big(C,\Ofa C(D)\big)$ is a basis, then there exists an element $g\in H^0\big(C,\Ofa C(2D)\big)$, such that $f_1^2,f_1f_2,f_2^2,g$ form a basis of $H^0\big(C,\Ofa C(2D)\big)$. Thus we have a map $\varphi\colon C\to\p3$ given by $(x_0,x_1,x_2,x_3)=(f_1^2,f_1f_2,f_2^2,g)$. It follows that the image $\overline{C}$ is contained in the quadric cone $x_0x_2-x_1^2=0$. In particular, the arithmetic genus of $\overline{C}$ is $9$ (see \cite{Ha}, Exercise V.2.9), thus $\overline{C}$ has at least a double point and we can repeat the argument used for curves carrying at least two distinct $g^1_4$.
\end{proof}

Now we characterize curves carrying exactly one $g^1_4$ of type $I$. We first state and prove a technical lemma which will be useful later on in the paper.

\begin{lemma}
\xlabel{cubic pencil}
If $C\subseteq\p3$ is a curve of degree $8$ and genus $7$, then there exist two cubic surfaces $F_1$, $F_2$ and a quartic surface $G$ through $C$, such that $C=F_1\cap F_2\cap G$. Moreover the general cubic surface through $C$ is smooth.
\end{lemma}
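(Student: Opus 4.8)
The plan is to study the ideal sheaf $\cI_C$ of the curve $C\subseteq\p3$ and to compute the dimensions of the spaces of surfaces of low degree containing $C$ by a cohomological argument. First I would determine $h^0\big(\p3,\cI_C(d)\big)$ for $d=2,3,4$. Since $C$ has degree $8$ and genus $7$, its Hilbert polynomial is $P_C(d)=8d-6$. From the exact sequence
\[
0\lra \cI_C(d)\lra \Ofa{\p3}(d)\lra \Ofa C(d)\lra 0
\]
I would read off, for each $d$, the relation $h^0\big(\p3,\cI_C(d)\big)=h^0\big(\p3,\Ofa{\p3}(d)\big)-h^0\big(C,\Ofa C(d)\big)+\big(\text{correction from }h^1\big)$. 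For $d\ge2$ a line bundle of degree $8d$ on a genus $7$ curve is nonspecial once $8d>12$, i.e. for $d\ge2$ already $\deg=16>2g-2=12$, so $h^1\big(C,\Ofa C(d)\big)=0$ and $h^0\big(C,\Ofa C(d)\big)=8d-6$ by Riemann--Roch. This forces
\[
h^0\big(\p3,\cI_C(2)\big)\ge 10-10=0,\quad
h^0\big(\p3,\cI_C(3)\big)\ge 20-18=2,\quad
h^0\big(\p3,\cI_C(4)\big)\ge 35-26=9.
\]
Thus there is at least a pencil of cubic surfaces through $C$, which gives the surfaces $F_1,F_2$, and a rich space of quartics.

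Next I would show that $C$ is contained in no quadric, so that $F_1,F_2$ have no common surface factor and the intersection $F_1\cap F_2$ is a genuine complete intersection curve of degree $9$. To see there is no quadric through $C$, note that a quadric through a degree-$8$ genus-$7$ curve would impose strong constraints: the above count gives $h^0(\cI_C(2))\ge0$ with equality expected, and I would confirm $h^0(\cI_C(2))=0$ by checking that $C$ is not contained in any quadric (a curve on a quadric in $\p3$ has, by adjunction, genus bounded in terms of its bidegree, and no bidegree giving degree $8$ yields genus $7$: bidegree $(a,b)$ with $a+b=8$ has arithmetic genus $(a-1)(b-1)\le 9$ but the only values are $0,4,6,6$ for $(1,7),(2,6),(3,5),(4,4)$ — none equal to $7$). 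Hence $h^1\big(\p3,\cI_C(2)\big)=0$ as well, and the counts above are equalities. Since $F_1,F_2$ cut out a complete intersection $Z=F_1\cap F_2$ of degree $9$ and arithmetic genus $10$, and $C\subseteq Z$ has degree $8$, the residual scheme $Z\setminus C$ is a curve of degree $1$, i.e. linked to $C$ by the complete intersection $(3,3)$.

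Now I would produce the quartic $G$ with $C=F_1\cap F_2\cap G$. Liaison gives $Z=C\cup C'$ with $C'$ of degree $1$ (a line or a point-scheme of degree $1$); to cut $C$ out of $Z$ I need a quartic $G$ meeting $Z$ exactly along $C$, equivalently a quartic containing $C$ but not containing the residual line $C'$. By the count $h^0\big(\p3,\cI_C(4)\big)=9$, whereas the quartics through all of $Z=F_1\cap F_2$ form the subspace $H^0\big(\Ofa{\p3}(1)\big)\cdot F_1+H^0\big(\Ofa{\p3}(1)\big)\cdot F_2$, which has dimension $4+4-1=7$ (the $-1$ from the Koszul syzygy $F_2\cdot F_1-F_1\cdot F_2$). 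Since $9>7$, a general $G\in H^0\big(\p3,\cI_C(4)\big)$ does not contain the residual $C'$, and therefore $F_1\cap F_2\cap G=C$ scheme-theoretically. Finally, for the smoothness statement I would invoke a Bertini-type argument: the pencil $\langle F_1,F_2\rangle$ has base locus $Z\supseteq C$, and away from the finitely many singular points of $C$ and the residual curve the general member of the pencil is smooth by Bertini, while transversality along $C$ follows from $C$ being cut out as stated; hence the general cubic through $C$ is smooth.

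The main obstacle I anticipate is the final liaison/residuation step, namely guaranteeing that $C$ is \emph{exactly} $F_1\cap F_2\cap G$ (scheme-theoretically, not just set-theoretically) and that the base locus of the cubic pencil is not larger or more degenerate than expected. Controlling the residual line $C'$ and ensuring that the general $G$ avoids it — so that no embedded or excess components survive in $F_1\cap F_2\cap G$ — is the delicate point; the numerical inequality $9>7$ is what makes it work, but one must verify that the $7$-dimensional subspace of reducible quartics is exactly the quartics vanishing on all of $Z$, which in turn requires that the complete intersection $Z$ be arithmetically Cohen--Macaulay (automatic for a complete intersection) so that its saturated ideal in degree $4$ is generated by $F_1,F_2$.
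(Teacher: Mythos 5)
Your route is essentially the paper's: no quadric through $C$ by the bidegree/genus count, $h^0\big(\p3,\cI_C(3)\big)\ge 2$ from the ideal-sheaf sequence, liaison of $C$ to a degree-one residual inside the $(3,3)$ complete intersection, and a quartic generator cutting out $C$. (The paper gets the conclusion $C=F_1\cap F_2\cap G$ by observing that $C$ is aCM because it is linked to a line, and then citing the liaison-theoretic description of the minimal generators of $I_C$, rather than by your explicit dimension count.) Within the existence part there are two arithmetic slips and one unresolved step. The genera on a smooth quadric are $(a-1)(b-1)=0,5,8,9$ for $(1,7),(2,6),(3,5),(4,4)$, not $0,4,6,6$ (the conclusion ``none equals $7$'' survives), and you should also dispose of singular quadrics (cone, plane pair). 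The space of quartics vanishing on $Z=F_1\cap F_2$ has dimension $8$, not $7$: the Koszul syzygy $F_2\cdot F_1-F_1\cdot F_2$ lives in degree $6$, so there is no relation among $\ell_1F_1+\ell_2F_2$ in degree $4$; again the inequality $9>8$ still gives you a $G$ not vanishing on $Z$. More seriously, ``$G$ does not contain $C'$, therefore $F_1\cap F_2\cap G=C$'' is not yet a proof even set-theoretically: $G$ cuts a divisor of degree $4$ on the line $C'$, and you must check that this divisor is exactly $C\cap C'$ (which has length $(9)+1-7-0-1+\dots$, i.e.\ length $4$ by the liaison genus formula $p_a(Z)=p_a(C)+p_a(C')+\mathrm{length}(C\cap C')-1$), and then still rule out embedded points there. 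You correctly identify this as the delicate point, but the honest way to close it is the aCM/minimal-generation statement you mention only in passing.

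The genuine gap is the final smoothness claim. Bertini gives smoothness of the general member of the pencil only \emph{away from the base locus}, and here the base locus is all of $C\cup L$, not a finite set; moreover $C$ is smooth, so ``the finitely many singular points of $C$'' is not the issue. What has to be shown is that the general $F=F_1+\lambda F_2$ is smooth \emph{along} $C\cup L$, and the only problematic points are the four points of $C\cap L$: away from them $F_1\cap F_2$ is a smooth curve, so both cubics are smooth and transverse there and every member of the pencil is smooth at such points; but at a point of $C\cap L$ the intersection $F_1\cap F_2$ is singular, so either one of the cubics is singular there or the two are tangent, and one must argue (as the paper does, using that $C$ is smooth and is cut out at such a point by $G$ together with one of the cubics) that at least one of $F_1,F_2$ is smooth at each such point; then $dF_1(p)+\lambda\,dF_2(p)=0$ fails for general $\lambda$ at each of the finitely many points $p\in C\cap L$. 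Your phrase ``transversality along $C$ follows from $C$ being cut out as stated'' does not supply this argument, and without it the lemma's second assertion is unproved.
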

\begin{proof}
If $C$ lies on a smooth quadric, then its bidegree $(a,b)$ would satisfy the conditions $a+b=8$ and $(a-1)(b-1)=7$. If $C$ lies on a quadric cone, then we should have $8=2a$ and $7=(a-1)^2$ (see \cite{Ha}, Exercise V.2.9). In both the cases the two equations have no common integral solutions. Thus the minimal degree of a surface through $C$ is at least $3$.

The cohomology of the standard exact sequence $0\to\mathcal{I}_C\to\Ofa{\p3}\to\Ofa C\to0$ twisted by $3$ shows that 
$$
h^0\big(\p3,\mathcal{I}_C(3)\big)\ge2\, .
$$
Thus we can find two cubic surfaces $F_1$ and $F_2$ such that $F_1\cap F_2=C\cup L$. Due to degree reasons $L$ is a line, thus $C$ is aCM (see \cite{Mi}, Theorem 5.3.1). It follows that $h^0\big(\p3,\mathcal{I}_C(3)\big)=2$, $h^0\big(\p3,\mathcal{I}_C(4)\big)=9$ and the homogeneous ideal of $C$ is minimally generated by two cubic $F_1$, $F_2$ and a single quartic surfaces $G$ (see \cite{Mi}, Proposition 5.2.10). In particular, $C=F_1\cap F_2\cap G$. 

The general cubic surface $F$ through $C$ is smooth outside $C\cup L$ due to Bertini's theorem. Since $C$ and $L$ are smooth and they are exactly the intersection of $F_1$ and $F_2$ outside $C\cap L$, it also follows that such a general cubic  $F$ is smooth outside $C\cap L$. At the points of $C\cap L$, $C$ is smooth, thus it is the intersection of two surfaces at those points. The two cubics $F_1$ and $F_2$ are tangent at the points of $C\cap L$, thus one can always take $G$ and one of the cubics. In particular, at least one among the cubics  $F_1$ and $F_2$ is smooth at the points of $C\cap L$. We conclude that we can assume that the general cubic $F$ through $C$ is smooth everywhere.
\end{proof}

We are now ready to characterize curves carrying a unique $g^1_4$ of type $I$.

\begin{proposition}
\xlabel{space model}
The curve $C$ carries exactly one $g^1_4$ of type $I$ if and only if it is endowed with a very ample $g^3_8$.
\end{proposition}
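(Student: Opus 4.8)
The plan is to fix a base--point--free $g^1_4$, say $|D|$, on $C$ and to study its residual $E:=K_C-D$. Riemann--Roch gives $h^0(C,\cO_C(E))=h^1(C,\cO_C(D))=4$ and $\deg E=8$, so $|E|$ is always a $g^3_8$; the issue is exactly when it is very ample and when $|D|$ is the unique, type~$I$, $g^1_4$. The reformulation I would record first is that, by Serre duality and Riemann--Roch, for every effective divisor $p+q$ of degree $2$ one has $h^0(C,\cO_C(E-p-q))=h^1(C,\cO_C(D+p+q))=h^0(C,\cO_C(D+p+q))$, the last equality because $\deg(D+p+q)=6=g-1$. Hence $|E|$ is very ample if and only if $h^0(C,\cO_C(D+p+q))=2$ for all $p,q$, i.e. if and only if $C$ carries no $g^2_6$ of the shape $|D+p+q|$.

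For the implication ``$\Rightarrow$'', assume $|D|$ is the unique $g^1_4$ and of type~$I$; in particular $C$ is neither hyperelliptic nor bielliptic (these carry infinitely many $g^1_4$'s). I would prove $|E|$ very ample by showing $C$ has \emph{no} $g^2_6$ at all. First, $C$ has no $g^2_4$ (it would make $C$ hyperelliptic) and no $g^2_5$ (a base--point--free one would be a birational plane quintic, of geometric genus $\le6$, or a compounded pencil; with base points it reduces to a $g^2_4$); hence every $g^2_6$ is base--point--free. Such a $g^2_6$ is either birational onto a plane sextic or compounded. A birational plane sextic of geometric genus $7$ has, by Clebsch's formula, three (possibly infinitely near) double points and no triple point (the latter would yield the forbidden $g^1_3$), so by Proposition~\ref{plane model} it falls in the excluded cases; a compounded $g^2_6$ forces $C$ trigonal or bielliptic, again excluded. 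Thus no $g^2_6$ exists and $|E|$ is very ample.

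For the converse ``$\Leftarrow$'', suppose $C$ carries a very ample $g^3_8$, embedding $C$ as a degree--$8$, genus--$7$ curve in $\PP^3$. Here I would invoke Lemma~\ref{cubic pencil}: $C$ lies on a smooth cubic surface $S$, and for a suitable cubic $F_2$ one has $S\cap F_2=C\cup L$ with $L$ a line, so that $[C]=-3K_S-[L]$ in $\operatorname{Pic}(S)$ and $Z:=C\cap L$ is effective of degree $C\cdot L=4$. Projecting $C$ from $L$ exhibits the base--point--free pencil $|E-Z|$, a $g^1_4$ I call $|D|$; adjunction on $S$ gives $K_C=(K_S+C)|_C=2E-Z$, so $D=K_C-E$ and $K_C-2D=Z$. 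Since $h^0(C,\cO_C(2D))=2+h^0(C,\cO_C(K_C-2D))=2+h^0(C,\cO_C(Z))$, the series $|D|$ is of type~$I$ precisely when $h^0(C,\cO_C(Z))=1$. I would obtain this from the restriction sequence $0\to\cO_S(L-C)\to\cO_S(L)\to\cO_C(Z)\to0$: one has $h^0(\cO_S(L))=1$ (as $L$ is a $(-1)$--curve), $h^0(\cO_S(L-C))=0$ (since $(L-C)\cdot(-K_S)=1-8<0$), while $H^1(\cO_S(L-C))=0$ because by Serre duality it is dual to $H^1\big(\cO_S(2\gamma)\big)$ with $\gamma=-K_S-L$ the nef conic class, and the latter vanishes by Riemann--Roch on the del Pezzo surface $S$ (there $\chi=h^0=3$ and $h^2=0$). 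Hence $h^0(\cO_C(Z))=1$ and $|D|$ is of type~$I$; its uniqueness then follows, a type~$I$ $g^1_4$ on a genus--$7$ curve being the only $g^1_4$ (\cite{C--K}, Theorem~3.2).

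I expect the converse to be the main obstacle, and within it the passage from the \emph{mere existence} of a $g^3_8$ to the type~$I$ property of the constructed $|D|$: it is precisely very ampleness, encoded through Lemma~\ref{cubic pencil} and the surface vanishing above, that forces $h^0(\cO_C(Z))=1$. Conceptually this is where the two excluded families are eliminated, since a curve with several $g^1_4$'s, or with a type~$II$ $g^1_4$, is by Proposition~\ref{plane model} a plane sextic with three double points, each of whose $g^1_4$'s is cut out by the lines through one of the double points; for such a $g^1_4$ the residual $g^3_8$ fails to separate the two points lying over that node, so no very ample $g^3_8$ can exist on it. The rest of the argument is bookkeeping with Riemann--Roch and adjunction.
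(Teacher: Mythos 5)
Your forward direction and your computation that the $g^1_4$ residual to a very ample $g^3_8$ is of type $I$ are both sound, and the latter is in fact a pleasant coordinate--free variant of the paper's argument: the paper pins down $C\in\vert 9\ell-4e_1-3e_2-\dots-3e_6\vert$ on the blown--up model of the cubic surface and computes $h^0\big(C,\Ofa C(2D)\big)$ from the class $6\ell-4e_1-2e_2-\dots-2e_6$, whereas you compute the equivalent quantity $h^0\big(C,\Ofa C(Z)\big)$ with $Z=K_C-2D=C\cdot L$ from the sequence $0\to\Ofa S(L-C)\to\Ofa S(L)\to\Ofa C(Z)\to0$; both come to the same thing.

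The genuine gap is the very last step of your converse: the assertion that ``a type $I$ $g^1_4$ on a genus $7$ curve is the only $g^1_4$,'' which you attribute to \cite{C--K}, Theorem 3.2. That implication is false. Take a general plane sextic with three ordinary nodes $N_1,N_2,N_3$: it has (at least) the three pencils $\vert D_i\vert=\vert H-p_i-q_i\vert$ cut by the lines through $N_i$, and each of them is of type $I$. Indeed $K_C-2D_1\sim H+p_1+q_1-p_2-q_2-p_3-q_3\sim r+s+p_1+q_1$, where $r+s$ is the residual intersection of the line $\overline{N_2N_3}$ with the sextic; for general position this degree--$4$ divisor does not move, so $h^0\big(C,\Ofa C(2D_1)\big)=2+h^0\big(C,\Ofa C(K_C-2D_1)\big)=3$. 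So ``type $I$'' alone does not force uniqueness, and the citation does not support the claim (in the paper, \cite{C--K}, Theorem 3.2 is invoked in a different situation, for the $8$--nodal plane septic model). The paper closes this gap by using the embedding itself: if $\vert D'\vert$ were a second $g^1_4$, then $\vert D+D'\vert$ is a $g^3_8$, so $\vert K_C-D-D'\vert$ is a $g^1_4$ $\vert D''\vert$ and the planes containing a divisor $D'\in\vert D'\vert$ cut out the pencil $\vert D''\vert$; hence $\mathrm{supp}(D')$ spans a line, which must be $L$, forcing $D'\sim Z=K_C-2D$ and therefore $h^0\big(C,\Ofa C(2D)\big)=2+h^0\big(C,\Ofa C(D')\big)=4$, contradicting type $I$. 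Your closing ``conceptual'' paragraph gestures at an alternative route (every $g^3_8$ on a multi--gonal curve fails to separate the two points over a node), but as written it relies on the unproved claim that \emph{every} $g^1_4$ of a three--nodal sextic is cut by lines through a node, and it is offered as commentary rather than as the proof; you should replace the appeal to ``type $I$ $\Rightarrow$ unique'' by an actual argument along one of these two lines.
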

\begin{proof}
Consider a divisor $D$ on $C$ and let $K_C$ a canonical divisor on $C$. Then $\vert D\vert$ is a $g^1_4$ (resp. a $g^3_8$) if and only if $\vert K_C-D\vert$ is a $g^3_8$ (resp. a $g^1_4$). Thus $C$ carries a unique $g^1_4$ if and only if it carries a unique $g^3_8$.

Now assume that $C$ is endowed with a unique base--point--free $g^1_4$ of type $I$, say $\vert D\vert$. We will show that $\vert K_C-D\vert$ is very ample. To this purpose we have first to show that it is base--point--free. For each $p\in C$, we have 
$$
h^0\big(C,\Ofa C(K_C-D-p)\big)=h^0\big(C,\Ofa C(K_C-D)\big)
$$
if and only if $\vert D+p\vert$ is a $g^2_5$. Thus $C$ would have a plane model of degree at most $5$, hence its genus would be at most $6$, a contradiction. Now we have to show that $\vert K_C-D\vert$ separates points and tangent vectors. For possibly coinciding points $p_1,p_2\in C$, we have 
$$
h^0\big(C,\Ofa C(K_C-D-p_1-p_2)\big)=h^0\big(C,\Ofa C(K_C-D)\big)-1
$$
if and only if $\vert D+p_1+p_2\vert$ is a $g^2_6$. Consider the associated morphism $\varphi\colon C\to \overline{C}\subseteq \p2$. We have three possible cases for the pair $(\deg(\varphi),\deg(\overline{C}))$, namely $(1,6)$, $(2,3)$, $(3,2)$. In the lat case $C$ would be trigonal. In the second case $C$ would be either bielliptic or hyperelliptic. The first case cannot occur due to Proposition \ref{plane model} above, since $C$ is endowed with a unique $g^1_4$ of type $I$.

Conversely let us assume that $C$ is endowed with a very ample $g^3_8$. Thus $C$ carries a $g^1_4$, say $\vert D\vert$. We know that the very ample $g^3_8$ is $\vert K_C-D\vert$. We have to show that such a $g^1_4$ is of type $I$ and it is unique. 

Assume that $\vert D\vert$ is of type $II$. Let us identify $C$ with its image via the map associated to the $g^3_8$ in $\p3$. We know from Proposition \ref{space model} that there is a line $L\subseteq \p3$ such that $C\cup L$ is the complete intersection of a smooth cubic surface $F$ with another cubic surface. Let us consider the standard representation of $F$ as the blow up of $\p2$ at six general points. Let $\ell$ be the strict transform on $F$ of a general line of $\p2$ and let $e_1,\dots,e_6$ be the exceptional divisors in the blow up. We can always assume that $L=e_1$ so that
$$
C\in \vert 9\ell-4e_1-3e_2-\dots-3e_6\vert\, .
$$
By adjunction on $S$ the canonical system $\vert K_C\vert$ on $C$ is cut out by $\vert 6\ell-3e_1-2e_2-\dots-2e_6\vert$, thus $\vert D\vert$ is cut out on $C$ by the pencil $\vert 3\ell-2e_1-e_2-\dots-e_6\vert$, which coincides with the pencil cut out on $S$ by the planes through $L$. 

Notice that $\vert 2D\vert$ is cut out on $C$ by  $\vert 6\ell-4e_1-2e_2-\dots-2e_6\vert$. The cohomology of the standard exact sequence $0\to\Ofa S(-C)\to\Ofa S\to\Ofa C\to0$ twisted by $\Ofa S(6\ell-4e_1-2e_2-\dots-2e_6)$, gives the exact sequence
\begin{align*}
0\longrightarrow H^0\big(S,\Ofa S(-3\ell+e_2+\dots+e_6)\big)&\longrightarrow H^0\big(S,\Ofa S(6\ell-4e_1-2e_2-\dots-2e_6)\big)\longrightarrow \\
\longrightarrow H^0\big(C,\Ofa C(2D)\big)&\longrightarrow H^1\big(S,\Ofa S(-3\ell+e_2+\dots+e_6)\big)\, .
\end{align*}
Since $\vert -3\ell+e_2-\dots+e_6\vert$ cannot be effective, the first space is zero. Thanks to Serre's duality $h^2\big(S,\Ofa S(-3\ell+e_2+\dots+e_6)\big)=h^0\big(S,\Ofa S(e_1)\big)=1$. Riemann--Roch theorem finally yields $h^1\big(S,\Ofa S(-3\ell+e_2+\dots+e_6)\big)=0$. 

Trivially $\vert 3\ell-2e_1-e_2-\dots-e_6\vert$ contains a smooth integral curve $A$. Since $(3\ell-2e_1-e_2-\dots-e_6)^2=0$, it follows that $\Ofa A(A)\cong\Ofa A$. Thus the cohomology of $0\to\Ofa S\to\Ofa S(A)\to\Ofa A(A)\to0$, yields $h^0\big(S,\Ofa S(3\ell-2e_1-e_2-\dots-e_6)\big)=2$. Since
$$
(6\ell-4e_1-2e_2-\dots-2e_6)\cdot(3\ell-2e_1-e_2-\dots-e_6)=0
$$
we deduce that each effective divisor in $\vert6\ell-4e_1-2e_2-\dots-2e_6\vert$ is the sum of two elements in $\vert3\ell-2e_1-e_2+\dots-e_6\vert$. It follows that  
$$
h^0\big(C,\Ofa C(2D)\big)=h^0\big(S,\Ofa S(6\ell-4e_1-2e_2-\dots-2e_6)\big)=3
$$
i.e.  $\vert D\vert$ is of type $I$.

Assume that $\vert D\vert$ is not unique and let  $\vert D'\vert$ be a distinct $g^1_4$ on $C$. Since $C$ carries a $g^3_8$, due to \cite{C--M}, Proposition 2.2 (ii) and (v), it follows that $C$ is not hyperelliptic nor bielliptic. As in the proof of Proposition \ref{plane model} on checks that $\vert D+D'\vert$ is a $g^3_8$. Then $\vert K_C-D-D'\vert$ is a $g^1_4$, say $\vert D''\vert$. In particular, the linear system of planes of $\p3$ cut out on $C$ exactly the linear system $\vert D'+D''\vert$. It follows that the planes through the divisor $D'\in\vert D'\vert$ cut out $\vert D''\vert$ on $C$. It follows that the support of $D'$ must be contained in a line $L'$. Since $\deg(D')=4$ such a line is contained in each cubic through $C$, thus $L'=L$, whence $\vert D''\vert=\vert D\vert$. Thus $h^0\big(C,\Ofa C(2D)\big)=2+h^0\big(C,\Ofa C(D')\big)=4$, i.e. $\vert D\vert$ should be of type $II$. But this contradicts what we proved above.
\end{proof}

\section{The rationality of ${\cM}_{7,0;4}^1$}\xlabel{sRationalityM704}
We fix a line in $L\subseteq\p3$ and we consider the set of pencils of cubic surfaces through $L$ which is a Grassmannian $G(2,16)$. If $\Sigma\in G(2,16)$ , then its base locus ocontains $L$. If $\Sigma$ is general, then it contains a smooth cubic surface $F$ and the residual intersection is a smooth curve $C$ on $F$, whence its  genus is $7$. Thus $C$ is endowed with a base--point--free $g^3_8$. In particular we have a natural rational map
$$
m_0\colon G(2,16)\dashrightarrow {\cM}_{7,0;4}^1
$$
whose image is the locus of tetragonal curves of genus $7$ endowed with a unique $g^1_4$ of type $I$, thanks to Proposition 3.3. Let $\PGL_{4,L}$ be the stabilizer of $L$ inside $\PGL_4$. It is clear that there is a natural action of $\PGL_{4,L}$ on $G(2,16)$. The orbits of such an action are trivially contained in the fibers of $m_0$.

\begin{lemma}
\xlabel{m0}
The map $m_0$ is dominant and it induces a birational isomorphism
$$
{\cM}_{7,0;4}^1\approx G(2,16)/\PGL_{4,L}\, .
$$ 
\end{lemma}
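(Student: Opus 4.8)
The plan is to repeat almost verbatim the argument of Lemma~\ref{lMapmn}, replacing the action of $\PGL_3$ on $\bP_n$ by the action of $\PGL_{4,L}$ on $G(2,16)$; the two genuinely new points are the verification that the relevant projectivities preserve the fixed line $L$, and the bookkeeping of dimensions.

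First I would identify the fibres of $m_0$. Suppose two general pencils $\Sigma'$ and $\Sigma''$ in $G(2,16)$, with residual curves $C'$ and $C''$ in $\p3$, have the same image $C\in\cM_{7,0;4}^1$, so that there is an isomorphism $\varphi\colon C'\to C''$. By Proposition~\ref{space model} each of $C'$, $C''$ carries a unique $g^3_8$, namely $\vert K_{C'}-D'\vert$ and $\vert K_{C''}-D''\vert$, and it is exactly this very ample system that embeds it in $\p3$. Hence $\varphi$ must carry the one system onto the other, so it is induced by a projectivity $g\in\PGL_4$ with $g(C')=C''$.

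Next I would check that $g$ fixes $L$, so that in fact $g\in\PGL_{4,L}$. By Lemma~\ref{cubic pencil} we have $h^0\big(\p3,\mathcal{I}_{C'}(3)\big)=2$, so the pencil of cubics through $C'$ is unique and equals $\Sigma'$; its base locus is $C'\cup L$, whence the residual line $L$ is intrinsically attached to the embedded curve $C'\subseteq\p3$, and likewise for $C''$. Since $g$ takes $C'$ to $C''$ it carries the unique pencil $\Sigma'$ to $\Sigma''$ and the residual line of $\Sigma'$ to that of $\Sigma''$; as both lines are the fixed $L$, we get $g(L)=L$ and $g(\Sigma')=\Sigma''$. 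Conversely, each element of $\PGL_{4,L}$ sends $\Sigma'$ to a pencil through $L$ with an isomorphic residual curve, hence to a point in the same fibre. Thus the fibres of $m_0$ are precisely the $\PGL_{4,L}$-orbits on $G(2,16)$, and consequently $G(2,16)/\PGL_{4,L}\approx\im(m_0)\subseteq\cM_{7,0;4}^1$. It then remains to prove that $\im(m_0)$ is dense, and here a dimension count suffices, exactly as in Lemma~\ref{lMapmn}. Lines in $\p3$ form $G(2,4)$ of dimension $4$, so the stabiliser $\PGL_{4,L}$ of $L$ inside the $15$-dimensional group $\PGL_4$ has dimension $11$; on the other hand $\dim G(2,16)=2\cdot 14=28$. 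Since the fibres are orbits, their dimension is at most $\dim\PGL_{4,L}$, whence
$$
17=\dim(\cM_{7,0;4}^1)\ge\dim(\im(m_0))\ge\dim G(2,16)-\dim\PGL_{4,L}=28-11=17\, .
$$
This forces equality throughout, so $\im(m_0)$ is dense and $m_0$ is dominant, completing the identification $\cM_{7,0;4}^1\approx G(2,16)/\PGL_{4,L}$.

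The step I expect to be the main obstacle is the intrinsic characterisation of $L$: one must be certain that the line is recovered from the abstract curve together with its canonical $g^3_8$-embedding, rather than being an extra choice. This is precisely what the uniqueness of the $g^3_8$ in Proposition~\ref{space model}, combined with $h^0\big(\p3,\mathcal{I}_C(3)\big)=2$ from Lemma~\ref{cubic pencil}, guarantees; without the uniqueness one could not conclude that $\varphi$ comes from $\PGL_4$ at all, and without $h^0\big(\p3,\mathcal{I}_C(3)\big)=2$ one could not pin down $L$ and hence reduce the symmetry group to $\PGL_{4,L}$.
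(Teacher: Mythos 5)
Your proposal is correct and follows essentially the same route as the paper: identify the fibres of $m_0$ with $\PGL_{4,L}$-orbits via the uniqueness of the very ample $g^3_8$ from Proposition \ref{space model}, then conclude by the dimension count $28-11=17$. The only difference is that you spell out why the induced projectivity fixes $L$ (via $h^0\big(\p3,\mathcal{I}_{C'}(3)\big)=2$ from Lemma \ref{cubic pencil}), a step the paper asserts without elaboration; this is a welcome clarification rather than a divergence.
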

\begin{proof}
We first describe the fibers of $m_0$.
Let $\Sigma'$ and $\Sigma''$ be two general pencils of cubic surfaces with base loci $C'\cup L$ and $C''\cup L$ respectively. Since  $\Sigma'$ and $\Sigma''$ are general, $C'$ and $C''$ are smooth curve of genus $7$. If $m_0(\Sigma')=m_0(\Sigma'')$, then the base loci of $\Sigma'$ and $\Sigma''$ must be abstractly isomorphic. Such an isomorphism must induces an isomorphism $\varphi\colon C'\to C''$. If $\vert D'\vert$ and $\vert D''\vert$ are the unique $g^1_4$ on $C'$ and $C''$ respectively, then we have $\varphi^*\vert D''\vert=\vert D'\vert$. Since also  $\varphi^*\vert K_{C''}\vert=\vert K_{C'}\vert$ holds, we finally deduce that $\varphi^*\vert K_{C''}-D''\vert=\vert K_{C'}-D'\vert$.  In particular $\varphi$ sends the very ample $g^3_8$ on $C''$ into the very ample $g^3_8$ on $C'$, thus it induces a projectivity of the whole $\p3$ transforming $\Sigma'$ in $\Sigma''$. Such a projectivity must fix $L$. It follows that the fibers of $m_0$ are exactly the orbits of the action of $\PGL_{4,L}$ on $G(2,16)$,

In particular $G(2,16)/\PGL_{4,L}\approx\im(m_0)\subseteq{\cM}_{7,0;4}^1$. In order to complete the proof it suffices to check that $\dim(\im(m_0))=\dim(G(2,16)/\PGL_{4,L})=\dim({\cM}_{7,0;4}^1)$. We have
$$
17=\dim({\cM}_{7,0;4}^1)\ge\dim(\im(m_0))\ge \dim(G(2,16))-\dim(\PGL_{4,L})=28-11=17\, ,
$$
thus $\dim(\im(m_0))=17=\dim({\cM}_{7,0;4}^1)$. We conclude that $\im(m_0)$ is dense inside ${\cM}_{7,0;4}^1$.
\end{proof}

Let $\SL_{4,L}$ be the stabilizer of $L$ in $\SL_4$. We are interested in the rationality properties of the quotient $G(2, 16)/\SL_{4,L}\cong G(2,16)/\PGL_{4,L}$. 

\begin{theorem}\xlabel{tCubics}
The space $X=G(2, 16)/ \SL_{4,L}$ is rational.
\end{theorem}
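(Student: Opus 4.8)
The plan is to compute the quotient $X=G(2,W)/\SL_{4,L}$, where $W:=H^0\big(\p3,\mathcal I_L(3)\big)\cong\bC^{16}$, by peeling off the group $\SL_{4,L}$ in stages, reducing at each step --- via the no--name lemma and rational sections coming from \emph{special} subgroups --- to the quotient of a smaller representation, until one reaches a classically rational configuration space. First I would fix coordinates so that $L=\{x_0=x_1=0\}$ and record the structure of $W$ as an $\SL_{4,L}$--module. Writing $Y:=\langle x_0,x_1\rangle$ and $Z:=\langle x_2,x_3\rangle$, the monomial bigrading by degree in $Y$ versus $Z$ exhibits the associated graded module as
$$
\mathrm{gr}\,W\;\cong\;\big(Y\otimes \mathrm{Sym}^2 Z\big)\,\oplus\,\big(\mathrm{Sym}^2 Y\otimes Z\big)\,\oplus\,\mathrm{Sym}^3 Y ,
$$
of dimensions $6+6+4=16$. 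Correspondingly $\SL_{4,L}=R_u\rtimes L_0$, where the reductive Levi $L_0\cong S(\GL(Y)\times\GL(Z))$ acts on each summand through the indicated \emph{binary--form} representations, and the abelian unipotent radical $R_u\cong\bC^4$ acts unipotently, lowering the $Y$--degree. Recall from Lemma \ref{m0} that the orbits of $\SL_{4,L}$ on $G(2,W)$ are the fibres of $m_0$; equivalently, the induced $\PGL_{4,L}$--action is almost free while the centre $\mu_4\subset\SL_4$ acts trivially --- which is exactly why $G(2,16)/\SL_{4,L}\cong G(2,16)/\PGL_{4,L}$.

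Next I would dispose of the unipotent radical. Since $R_u$ is unipotent it is special, and one cannot use geometric invariant theory; instead I would exhibit an explicit $R_u$--invariant rational map trivialising the $R_u$--action --- equivalently, normalise four coordinates of a generic pencil using the four parameters of $R_u\cong\bC^4$ --- producing an $L_0$--stable rational slice $S\subseteq G(2,W)$ with $G(2,W)\approx S\times\bC^4$ and $X\approx S/L_0$. Here a Miyata--type triangularisation of the $R_u$--action on the tautological coordinates guarantees both that such a slice exists and that the induced quotient map is a Zariski--locally trivial affine fibration, so that no rationality is lost.

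It then remains to treat the reductive quotient $S/L_0$. Mimicking the device used above in the pointed case, I would not argue with $L_0$ directly (it is only isogenous to $\bC^*\times\SL_2\times\SL_2$) but would cover it by genuinely special groups: writing a pencil as an ordered pair of cubics realises $G(2,W)$ birationally as $W^{\oplus2}$ modulo the special group $\GL_2$, and the factors $\GL(Y)$, $\GL(Z)$ are themselves special. Using that $\SL_2$ and $\GL_2$ are special and act almost freely, the projections onto the successive quotients admit rational sections; composing them as in the treatment of $\widehat{\mathcal E}\to\mathcal E/\PGL_3$ realises $X$ as a tower of Zariski--locally trivial projective and affine bundles over the quotient of (a product of spaces of) binary forms by $\SL_2$, respectively by the residual torus $\bC^*$. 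That base is rational by the classical invariant theory of binary forms --- the same circle of results (Katsylo, Bogomolov, Castelnuovo) invoked for the hyperelliptic and trigonal strata --- and the bundle structure then yields the rationality of $X$.

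The main obstacle is the representation--theoretic heart of the second and third steps: one must control the $\SL_{4,L}$--action on $W$, and hence on $G(2,W)$, explicitly enough to build the successive slices and sections and, above all, to identify the final residual quotient with a configuration space that is \emph{known} to be rational. The non--reductive quotient by $R_u$ is the most delicate point in principle, since it lies outside the reach of GIT and forces the use of an explicit triangularisation; but the genuinely hard labour is the invariant--theoretic bookkeeping needed to pin down the base of the tower and to verify that every fibration encountered is Zariski--locally trivial and that the action stays almost free throughout the reduction.
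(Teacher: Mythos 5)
Your reduction of $W=H^0\big(\p3,\mathcal I_L(3)\big)$ to the graded pieces $Y\otimes\mathrm{Sym}^2Z\oplus\mathrm{Sym}^2Y\otimes Z\oplus\mathrm{Sym}^3Y$ and the identification $\SL_{4,L}=R_u\rtimes L_0$ agree with the paper's Lemma \ref{lInitialReduction} (there the pencil factor $\CC^2$ is kept throughout, giving a $32$--dimensional linear representation of $(\GL_2\times G_R')\ltimes U$). The genuine gap is in your treatment of the reductive part. You assert that, because $\SL_2$ and $\GL_2$ are special and ``act almost freely'', the successive quotients admit rational sections and the problem collapses onto a quotient of binary forms by a single $\SL_2$. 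This fails exactly where the real work lies: the bottom Jordan--H\"older piece $\CC^2\otimes\mathrm{Sym}^2\CC^2\otimes\CC^2$ is \emph{not} almost free for $\SL_2\times\SL_2\times\SL_2$; its stabilizer in general position is a nontrivial finite Heisenberg-type group of order $16$ (Lemma \ref{lStabilizer}, proved via $\mathrm{Spin}_3\times\mathrm{Spin}_4\simeq\SL_2\times(\SL_2\times\SL_2)$ and the normal form of a general $3\times4$ matrix under $\mathrm{SO}_3\times\mathrm{SO}_4$). Specialness buys Zariski-local triviality only for generically free actions, so no tower of bundles can be erected over this piece. Moreover, every summand of $W$ is a tensor product mixing $\GL(Y)$, $\GL(Z)$ and the pencil factor, so there is no invariant subspace on which one factor acts while the others act trivially; the no-name lemma gives you no way to peel the factors off one at a time, and the residual base is not a space of binary forms for one $\SL_2$. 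The paper instead passes to the $(G,\,N(H)\ltimes U)$--section determined by $Q_3^H$, computes $N(H)$ (an extension of $\tilde{\mathfrak{S}}_4\ltimes\tilde H$ by the centre of $G_R$), and must then construct by hand an $11$--dimensional generically free $N(H)/I$--representation with rational quotient (Lemma \ref{lBasicRationalityGroup}) before the no-name lemma applies. None of this is a formality, and your proposal contains no substitute for it.

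A secondary concern: eliminating $R_u\cong U$ first by an explicit $L_0$--equivariant slice is not what the paper does and is not obviously available; the paper keeps $U$ until the end and removes it by adjoining the auxiliary extension $0\to(\CC^2)^{\vee}\otimes\CC^2\to E\to\CC\to0$ (Lemma \ref{lFurtherReduction}), which yields only \emph{stable} rationality and forces the closing dimension count (a complement of dimension $\ge7$ in the final no-name step, together with the generic freeness of Lemma \ref{lGenericallyFreeTwoStep}, which is verified by computer algebra). If you insist on slicing $U$ first, you must exhibit the triangularisation explicitly and check that the resulting identification $G(2,W)\approx S\times\CC^4$ is $L_0$--equivariant; that is an additional unproved claim, not a routine step.
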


The proof requires several preparatory results. The next lemma shows that $X$ is birational to a linear group quotient and collects all the facts about this representation which are needed in the sequel.

\

\begin{lemma}\xlabel{lInitialReduction}
The space $X = G (2, \: 16) / \mathrm{SL}_{4, L}$ is birational to the quotient $R/ G$ where $R$ is a linear representation of the linear algebraic group $G$, and $R$, $G$ and the action are defined below: 
\begin{itemize}
\item[(1)]
\textbf{(Group structure)}. One has
\[
G = G_R \ltimes U
\]
where $G_R = \mathrm{GL}_2  \times G_R'$, the group $G_R'\subseteq\mathrm{SL}_4$ being the subgroup consisting of matrices
\[
\left( \begin{array}{cc} A_2 & 0 \\ 0 & A_3 \end{array} \right) \, , \quad A_2, \: A_3 \in \mathrm{Mat}_{2\times 2} (\CC ),
\]
hence as an abstract group a central product $(\CC^{\ast})\cdot (\SL_2 \times \SL_2)$. Here the torus $\CC^{\ast}$ is embedded in $\SL_4$ via
\[
\lambda \mapsto \mathrm{diag} (\lambda, \lambda, \lambda^{-1}, \lambda^{-1}) \, . 
\]
The group $U$ is given by $U = (\CC^2)^{\veeÊ} \otimes \CC^2 = \mathrm{Hom} (\CC^2 , \: \CC^2 )$, viewed as an abelian algebraic group under addition of homomorphisms. As a normal subgroup in the semidirect product, elements $u\in U$ are acted on by $(A_1, \: A_2, \: A_3 ) \in G_R$, where $A_1 \in \mathrm{GL}_2 $, and $A_2$, $A_3$ are as before, in the following way:
\[
(A_1,\:  A_2, \: A_3 ) \cdot u = A_2 u A_3^{-1} \, .
\]

\item[(2)]
\textbf{(Structure of the representation)}. 
 The representation $R$ is a representation with a Jordan-H\"older filtration
\[
R_0 :=0 \subset R_1 \subset R_2 \subset R_3 = R
\]
with completely reducible quotients $Q_i=R_{i}/R_{i-1}$ which as representations of $G_R$ are given by
\begin{align*}
Q_1 &= \CC^2 \otimes \mathrm{Sym}^3 \CC^2 \otimes \CC, \\
Q_2 &= \CC^2 \otimes \CC^2\otimes \mathrm{Sym}^2 \CC^2 ,\\
Q_3 &= \CC^2 \otimes  \mathrm{Sym}^2 \CC^2\otimes \CC^2\, , 
\end{align*}
with the action of $G_R$ being given by the tensor product action of $(A_1, \: A_2, \: A_3)$ in the three factors. The action of $U$ is induced by the standard $G_R$-equivariant maps
\begin{align*}
(\CC^2)^{\vee } \otimes \CC^2&  \to  \mathrm{Hom} (\CC^2 \otimes \mathrm{Sym}^2 \CC^2\otimes \CC^2, \: \CC^2 \otimes \CC^2 \otimes \mathrm{Sym}^2 \CC^2 ), \\
(\CC^2)^{\vee } \otimes \CC^2&  \to  \mathrm{Hom} (\CC^2 \otimes \CC^2 \otimes \mathrm{Sym}^2 \CC^2 , \: \CC^2 \otimes \mathrm{Sym}^3 \CC^2 \otimes \CC )
\end{align*}
given by contraction and multiplication in the second and third factors. 
\item[(3)]
\textbf{(Ineffectivity kernel)} The ineffectivity kernel $I$ of the action of $G$ on $R$ is as an abstract group $I\simeq \ZZ/4\ZZ$, and generated by
\[
(A_1, \: A_2, \: A_3) = \left (  \left( \begin{array}{cc} i & 0 \\ 0 & i\end{array} \right) , \: \left( \begin{array}{cc} i & 0 \\ 0 & i\end{array} \right) , \: \left( \begin{array}{cc} i & 0 \\ 0 & i\end{array} \right)   \right) \, .
\]
The group acting effectively will be denoted by $\bar{G} = G/(\ZZ/4\ZZ)$. 
\end{itemize}
\end{lemma}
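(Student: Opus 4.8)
The plan is to turn the geometric quotient of Lemma \ref{m0} into a linear quotient by two standard moves: first replacing the Grassmannian of pencils by a space of \emph{pairs} of cubics (which introduces the extra factor $A_1$), and then writing out the explicit parabolic that stabilizes $L$. Set $W := H^0\big(\p3,\cI_L(3)\big)$, so $\dim_\CC W = 16$ and $G(2,16)=G(2,W)$ is the Grassmannian of pencils of cubics through $L$. Over the open locus of injective homomorphisms, the assignment sending $\phi\in\mathrm{Hom}(\CC^2,W)$ to its image $\phi(\CC^2)\in G(2,W)$ realizes $G(2,W)$ as the geometric quotient of $\mathrm{Hom}(\CC^2,W)$ by the free action of $\GL_2=\GL(\CC^2)$ (change of basis of the pencil). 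This identification is $\SL_{4,L}$-equivariant for the natural action on $W$, and the two actions commute, so
$$
X = G(2,16)/\SL_{4,L} \approx \mathrm{Hom}(\CC^2,W)\big/\big(\GL_2\times\SL_{4,L}\big).
$$
I would then set $R := \mathrm{Hom}(\CC^2,W)\cong(\CC^2)^\vee\otimes W$ and $G := \GL_2\times\SL_{4,L}$; it remains to make $W$ and $G$ explicit.

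For the group structure in part (1), choose coordinates with $L=\{x_2=x_3=0\}$, i.e. $L=\PP(V')$ for $V'=\langle e_0,e_1\rangle$ and $V''=\langle e_2,e_3\rangle$. Then $\SL_{4,L}$ is exactly the parabolic stabilizing the subspace $V'$, whose Levi decomposition reads $\SL_{4,L}=G_R'\ltimes U$ with Levi factor $G_R'=\{\,\mathrm{diag}(A_2,A_3):\det A_2\det A_3=1\,\}$ and unipotent radical $U=\mathrm{Hom}(V'',V')$, the upper--right block. Conjugating $U$ by the Levi yields $u\mapsto A_2uA_3^{-1}$, the one--parameter central torus is $\lambda\mapsto\mathrm{diag}(\lambda,\lambda,\lambda^{-1},\lambda^{-1})$, and $G_R'$ is thereby exhibited as the central product $(\CC^\ast)\cdot(\SL_2\times\SL_2)$. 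Since the pencil group $\GL_2=\GL(\CC^2)$ commutes with the whole of $\SL_{4,L}$, we obtain $G_R=\GL_2\times G_R'$ and $G=G_R\ltimes U$, as claimed.

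For the representation structure in part (2), I would decompose $W=(\cI_L)_3$ by its bidegree in the two groups of variables $(x_0,x_1)$ and $(x_2,x_3)$: writing $q$ for the degree in $x_2,x_3$, the subspaces $F_{\ge b}:=\bigoplus_{q\ge b}(\text{bidegree }(3-q,q))$ form an increasing $U$--stable filtration, because an element of $U$ replaces each of $x_0,x_1$ by itself plus a linear form in $x_2,x_3$ and hence can only raise $q$. Tensoring with $(\CC^2)^\vee$ produces the Jordan--H\"older filtration $R_1\subset R_2\subset R_3=R$, whose associated graded are the $G_R'$--irreducibles
$$
\mathrm{Sym}^3(V'')^\vee,\quad (V')^\vee\otimes\mathrm{Sym}^2(V'')^\vee,\quad \mathrm{Sym}^2(V')^\vee\otimes(V'')^\vee,
$$
which, using the self--duality of $\SL_2$--representations, are precisely the modules $Q_1,Q_2,Q_3$. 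Because $U$ acts trivially on the associated graded, each $Q_i$ is completely reducible as a $G_R$--module; the off--diagonal (degree $+1$ in $q$) part of the $U$--action then descends to $G_R$--equivariant maps $Q_3\to Q_2\to Q_1$, which I would identify with the contraction--and--multiplication homomorphisms of the statement by matching weights. This is where the real work lies: one must verify that the unipotent action is exactly the advertised pair of equivariant maps, lowering the second tensor factor and raising the third, and not merely some filtered perturbation of them; this is the main obstacle.

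Finally, for the ineffectivity kernel in part (3), an element of $G$ can act trivially on $R$ only if its unipotent part vanishes, since $U$ already acts faithfully on $W$ (a nonzero substitution moves some cubic in the ideal). For the Levi part $(A_1,A_2,A_3)$, triviality on the tensor product $(\CC^2)^\vee\otimes W$ forces $A_1$ to be scalar and $(A_2,A_3)$ to act as a single scalar $\nu$ on $W$; testing this scalar on the three graded pieces forces $A_2=A_3=aI_2$ with $a^4=1$ (the $\SL_4$--determinant condition giving $a^4=1$), whence $\nu=a$ and then $A_1=aI_2$. Thus the ineffectivity kernel is the copy of $\ZZ/4\ZZ$ generated by $(iI_2,iI_2,iI_2)$, and the effective group is $\bar G=G/(\ZZ/4\ZZ)$.
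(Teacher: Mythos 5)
Your argument is correct, and for parts (1) and (2) it is essentially the paper's own reduction written out in full: the paper likewise replaces $G(2,16)$ by $\mathrm{Hom}\bigl(\CC^2,\,\mathrm{Sym}^3(\CC^4)^{\vee}/\mathrm{Sym}^3(L)^{\vee}\bigr)$ modulo $\GL_2\times\SL_{4,L}$, identifies $\SL_{4,L}$ with the parabolic $G_R'\ltimes U$, and then simply \emph{asserts} that this $G$-module has a filtration with the stated quotients; your bidegree filtration in $(x_0,x_1)$ versus $(x_2,x_3)$ is precisely the justification the paper omits, and your labelling of the graded pieces is the internally consistent one (the degree in the $A_3$-variables increases along the filtration, so $Q_1$ carries $\mathrm{Sym}^3$ of the $A_3$-copy of $\CC^2$; the statement's placement of that factor in the $A_2$-slot appears to be a slip). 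The genuine divergence is in part (3): the paper does not compute the ineffectivity kernel by hand but derives it from the stronger generic-freeness statement of Lemma \ref{lGenericallyFreeTwoStep}, which is verified by a Macaulay 2 computation, whereas you determine the kernel directly --- first forcing the unipotent part to vanish by passing to the associated graded (where the Levi acts semisimply, so triviality on the graded pieces gives triviality on $W$), then solving the scalar conditions on $(A_1,A_2,A_3)$ to get $A_2=A_3=aI$, $a^4=1$, $A_1=aI$. Your route has the advantage of being checkable without software. The one step you rightly flag as needing care --- that the induced $U$-action on the graded quotients is \emph{exactly} the advertised contraction-and-multiplication maps --- closes easily: the space of $G_R'$-equivariant maps $U\to\mathrm{Hom}(Q_{i+1},Q_i)$ is one-dimensional by Schur's lemma, so any nonzero equivariant map agrees with the advertised one up to a scalar, and nontriviality is seen on a single monomial such as $x_0^2x_2$ under $x_0\mapsto x_0+x_2$.
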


\begin{proof}
Note that for $X = G (2, \: 16) / \mathrm{SL}_{4, L}$, the group $\mathrm{SL}_{4, L}$ is nothing but $G_R' \ltimes U$, and $X$ is birational to (we view the stabilized line $L\subset \CC^4$ as a two-plane)
\[
\left(  \mathrm{Hom} ( \CC^2 , \; \mathrm{Sym}^3 (\CC^4)^{\vee } /\mathrm{Sym}^3 (L)^{\veeÊ} \right) / \mathrm{GL}_2 \times \SL_{4,L}\, ,
\]
which in turn is birational to 
\[
(\CC^2 \otimes \mathrm{Sym}^3 (\CC^4)^{\vee } /\mathrm{Sym}^3 (L)^{\veeÊ} )/ \mathrm{GL}_2 \times \SL_{4,L}
\]
(because of the self-duality of $\CC^2$ as $\mathrm{SL}_2 $-representation, we get birational quotients if we choose as $\mathrm{GL}_2 $-representation in the first factor of the tensor product $\CC^2$ or $(\CC^2)^{\vee }$; we prefer $\CC^2$ for simplicity, but this is not essential for the following). The representation $\CC^2 \otimes \mathrm{Sym}^3 (\CC^4)^{\vee } /\mathrm{Sym}^3 (L)^{\veeÊ} $ has a filtration as a $G$-module with quotients $Q_1$, $Q_2$, $Q_2$, $Q_3$ as claimed in (2) above. This proves (1) and (2).\\
For the determination of the ineffectivity kernel in (3), we refer to the stronger statement Lemma \ref{lGenericallyFreeTwoStep} below proven with the help of computer algebra, and which implies in particular part (3) of the present lemma. 
\end{proof}

\begin{remark}\xlabel{rRepresentationTheoryS4}
We have to recall the representation theory of $\mathfrak{S}_4$ viewed as the group of permutations of four
letters $\{ a,\: b,\: c,\: d\}$ for subsequent use. The character table is (cf. \cite{Sr}).\\
\begin{center}
\begin{tabular}{c | rrrrr}
 & $1$ & $(ab)$ & $(ab)(cd)$ & $(abc)$ & $(abcd)$  \\ \hline
$\chi_0$       & $1\;$ & $1\;$ & $1\;$ & $1\;$ & $1\;$ \\
$\epsilon$     & $1\;$ & $-1\;$ & $1\;$ & $1\;$ & $-1\;$ \\
$\theta$       & $2\;$ & $0\;$ & $2\;$ & $-1\;$ & $0\;$ \\
$\psi$         & $3\;$ & $1\;$ & $-1\;$ & $0\;$ & $-1\;$ \\
$\epsilon\psi$ & $3\;$ & $-1\;$ & $-1\;$ & $0\;$ & $1\;$ 
\end{tabular}
\end{center}\vspace{0.3cm}
$V_{{\chi}_0}$ is the  trivial $1$--dimensional representation, $V_{\epsilon}$ is the $1$--dimensional
representation where $\epsilon (g)$ is the sign of the permutation $g$; $\mathfrak{S}_4$ being the semidirect
product of $\mathfrak{S}_3$ by the Klein four group, $V_{\theta}$ is the irreducible two-dimensional
representation induced from the representation of $\mathfrak{S}_3$ acting on the elements of $\mathbb{C}^3$ which
satisfy $x+y+z=0$ by permutation of coordinates. $V_{\psi}$ is the representation on the elements of $\CC^4$ with $x+y+z+w=0$ by permutation of coordinates; finally, $V_{\epsilon\psi}=V_{\epsilon}\otimes V_{\psi}$.
\end{remark}

The $G_R$-representation $Q_3$ in item (2) of Lemma \ref{lInitialReduction} is one with a nontrivial stabilizer in general position $H$. The determination of $H$ and its normalizer $N(H)$ in $G_R$ is carried out in the next lemma.  It gives us a $(G, \: N(H) \ltimes U)$-section in $R$. 

\begin{lemma}\xlabel{lStabilizer}
Consider the action of $G_R$ on $Q_3 = R_3/R_2 = \CC^2 \otimes \mathrm{Sym}^2 \CC^2 \otimes \CC^2$. 
\begin{itemize}
\item[(1)] \textbf{(Structure of the stabilizer in general position)}. 
The stabilizer in general position $H$ of the representation $Q_3= R_3/R_2$ inside the group $G_R$, consisting of matrices $(A_1, \: A_2, \: A_3)$ as in Lemma \ref{lInitialReduction},  can be described as follows: put
\[
A = \left( \begin{array}{cc} i & 0 \\ 0 & -i  \end{array}\right)  , \qquad B = \left(   \begin{array}{cc} 0 & 1 \\ -1 & 0 \end{array}\right)\, .
\]
Then $A$ and $B$ generate inside $\mathrm{SL}_2 $ a nontrivial central extension of the Klein four group $\ZZ/2\ZZ^2 \subset \mathrm{PSL}_2 $, which is a finite Heisenberg group (or extraspecial $2$-group in different terminology) which we denote by $\tilde{H}$ 
\[
0 \to \ZZ /2\ZZ \simeq  \{ \pm 1 \}  \to \tilde{H} \to (\ZZ /2\ZZ )^2 = \langle \overline{A} , \overline{B} \rangle \to 0
\]
(here $\overline{A}$ and $\overline{B}$ denote the classes of $A$ and $B$ in $\mathrm{PSL}_2 $ respectively).

\

Then the stabilizer in general position $H \subset G_R$ (well defined up to conjugacy) has a representative  given by the subgroup of matrices
\[
(A_1, \: A_2, \: A_3 ) = (\lambda h , \: \pm \lambda^{-1} h , \: \lambda h) , \quad \lambda \in \CC^{\ast } , \; h \in \tilde{H} \, .
\]
\item[(2)] \textbf{(Structure of the normalizer)}. 
The normalizer $N(H)$ of $H$ in $G_R$ can be described as follows: define matrices $(\tau, \tau, \tau )$ and $(\sigma, \sigma , \sigma )$ by 
\[
\tau = \left( \begin{array}{cc}  \theta^{-1} & 0 \\ 0 & \theta \end{array}\right), \quad \sigma := \frac{1}{\sqrt{2}} \left(  \begin{array}{cc} \theta^3 & \theta^7 \\ \theta^5 & \theta^5  \end{array}\right)\, .
\]
and $\theta = \mathrm{exp} (2\pi i/8)$. Their classes in $\mathrm{PSL}_2 $ generate the symmetric group $\mathfrak{S}_4$ normalizing the Klein four group, but inside $\mathrm{SL}_2 $, they generate a nontrivial central extension
\[
1 \to \ZZ /2\ZZ \simeq \{ \pm 1 \} \to \tilde{\mathfrak{S}}_4 \to \mathfrak{S}_4 \to 1 \, .
\]
Then $N(H)$ contains a subgroup abstractly isomorphic to $\tilde{\mathfrak{S}}_4 \ltimes \tilde{H}$ where: $\tilde{\mathfrak{S}}_4$ is embedded in $N(H)$ diagonally as the matrices
\[
(A_1, \: A_2, \: A_3 ) = (x, \: x, \: x ) , \: x\in \tilde{\mathfrak{S}}_4 \, ;
\]
and the additional copy of $\tilde{H}$ in the semidirect product $\tilde{\mathfrak{S}}_4 \ltimes \tilde{H}$ is embedded in the second factor as matrices
\[
(A_1, \: A_2, \: A_3 ) = (\mathrm{id} , \: y, \: \mathrm{id}) , \: y\in \tilde{H} \, .
\]
The complete normalizer $N(H)$ is generated by $\tilde{\mathfrak{S}}_4 \ltimes \tilde{H}$ and the center of $G_R$ consisting of matrices
\[
(A_1, \: A_2, \: A_3 ) = \left(  \left( \begin{array}{cc}  \lambda & 0 \\ 0 & \lambda \end{array}\right) , \;  \left( \begin{array}{cc}  \pm \mu & 0 \\ 0 & \pm \mu \end{array}\right) , \: \left( \begin{array}{cc}  \mu^{-1} & 0 \\ 0 & \mu^{-1} \end{array}\right) \right) \, .
\]

\

\item[(3)] \textbf{(Structure of the $(G, \; N(H) \ltimes U)$-section)}.
By (1) and (2) there is a $(G, \; N(H) \ltimes U)$-section $R'$ in $R$ which is a linear representation of $N(H) \ltimes U$ with a Jordan-H\"older filtration $0 = R_0' \subset R_1' \subset R_2' \subset R_3' = R'$ with completely reducible quotients $Q_i' = R_i'/R_{i-1}'$ given by 
\[
Q_1' =Q_1 = \CC^2 \otimes \mathrm{Sym}^3 \CC^2 , \; Q_2' = Q_2= \CC^2 \otimes \CC^2 \otimes \mathrm{Sym}^2 \CC^2 , \; Q_3'= Q_3^H\, .
\]

The space of $H$--invariants in $R_3/R_2$ is an $N(H)$--section for the action of $\prod_{i=1}^3 \SL_2$. This space $(R_3/R_2)^H$ has dimension $3$. The action of the copy of $\tilde{\mathfrak{S}}_4$ in $N(H)$ on $Q_3^H$ is via the standard representation of $\mathfrak{S}_3$. That is, in the notation introduced in Remark \ref{rRepresentationTheoryS4},
\[
Q_3^H = V_{\chi_0} \oplus V_{\theta } \, .
\]
\end{itemize}
\end{lemma}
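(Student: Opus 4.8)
The plan is to exploit a classical $\SL(V_1)\times\SL(V_3)$-invariant, $\SL(V_2)$-covariant attached to a point of $Q_3$, which reduces the determination of the stabilizer in general position to the elementary projective geometry of four points on $\PP^1$. Write $Q_3 = V_1\otimes\mathrm{Sym}^2 V_2\otimes V_3$ with $V_i\cong\CC^2$, and view a point $q\in Q_3$ as a quadratic map $\PP(V_2^{\vee})\to V_1\otimes V_3\cong\mathrm{Mat}_{2\times2}(\CC)$, $\xi\mapsto q(\xi,\xi)$. Taking determinants defines an $\SL(V_2)$-equivariant assignment $q\mapsto\Delta_q:=\det q(\xi,\xi)\in\mathrm{Sym}^4 V_2\otimes\det V_1\otimes\det V_3$, i.e. a binary quartic on $V_2^{\vee}$ which is invariant under $\SL(V_1)\times\SL(V_3)$. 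For $q$ general, $\Delta_q$ has four distinct roots $\xi_1,\dots,\xi_4$, and the stabilizer of this configuration in $\mathrm{PSL}(V_2)$ is the Klein four group of double transpositions; normalizing $\Delta_q$ to the even form $x^4+px^2y^2+y^4$ one checks that this Klein four group is precisely the image of $\langle A,B\rangle=\tilde H$. Hence the $A_2$-component of any element stabilizing $q$ lies in $\CC^{\ast}\cdot\tilde H$.

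First I would then pin down the $A_1$- and $A_3$-components. At each root $\xi_k$ the matrix $q(\xi_k,\xi_k)$ has rank one, so it determines a point of $\PP(V_3)$ (its image) and a point of $\PP(V_1)$ (its cokernel); for general $q$ these are four points in general position in each of $\PP(V_1)$ and $\PP(V_3)$, canonically labelled by $\xi_1,\dots,\xi_4$. If $(A_1,A_2,A_3)$ fixes $q$ and $\overline{A_2}=\overline{h}$ induces the double transposition $\pi$ of the roots, then $A_1$ and $A_3$ must realize the same permutation $\pi$ of the corresponding four points; since four general points on $\PP^1$ have stabilizer exactly the Klein four group, each nontrivial permutation is realized by a unique projectivity, forcing $\overline{A_1}=\overline{A_3}=\overline{h}$ under the labelling. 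Lifting to $\SL_2$ gives $A_j\in\pm h$ in each factor, and matching the residual signs with the constraint $\det A_2\det A_3=1$ yields exactly the family $(\lambda h,\pm\lambda^{-1}h,\lambda h)$; one checks directly that the scalar by which such an element acts on $Q_3$ is $\lambda\cdot\lambda^{-2}\cdot\lambda=1$, so these elements genuinely fix the three-dimensional space $Q_3^{\tilde H}$ computed below, proving both inclusions. This precise determination of the stabilizer (and, with it, the ineffectivity kernel of Lemma \ref{lInitialReduction}(3)) is the main obstacle, and is the natural place to fall back on the computer-algebra verification of Lemma \ref{lGenericallyFreeTwoStep}.

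For (2) I would compute $N(H)$ from the normalizer of $\tilde H$ inside a single $\SL_2$. Since $H=Z_0\cdot\tilde H_{\mathrm{diag}}$, where $Z_0$ is the subtorus $(\lambda I,\pm\lambda^{-1}I,\lambda I)$ and $\tilde H_{\mathrm{diag}}=\{(h,h,h)\}$, any element normalizing $H$ must normalize the diagonal $\tilde H$. The normalizer of $\tilde H$ in $\SL_2$ maps onto the octahedral group $\mathfrak{S}_4=N_{\mathrm{PSL}_2}(\text{Klein four})$ and is the binary octahedral group $\tilde{\mathfrak{S}}_4$, generated by the classes of $\tau$ and $\sigma$; embedded diagonally as $(x,x,x)$ it gives one copy inside $N(H)$. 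The extra copy of $\tilde H$ in the middle factor, $(\mathrm{id},y,\mathrm{id})$, normalizes $H$ because conjugation by $y\in\tilde H\cong Q_8$ sends $h\mapsto\pm h$, and the resulting sign is absorbed by the $\pm$ in the description of $H$. Together with the full (two-dimensional) center of $G_R$ these generate $N(H)$, and the dimension count $\dim H=1$, $\dim N(H)=2$ confirms that no further positive-dimensional part can occur.

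Finally, for (3) I would invoke the standard section principle: as $H$ is the stabilizer in general position of the $\prod_{i}\SL_2$-action on $Q_3$ and $N(H)$ its normalizer, the fixed space $Q_3^H=Q_3^{\tilde H}$ is an $N(H)$-section, yielding the asserted $(G,\,N(H)\ltimes U)$-section $R'$ with $Q_1'=Q_1$, $Q_2'=Q_2$, $Q_3'=Q_3^H$. To identify $Q_3^H$ I would decompose $Q_3$ as a representation of $\tilde H\cong Q_8$: here $V_1\cong V_3$ is the $2$-dimensional irreducible $W$, while $\mathrm{Sym}^2 V_2=\chi_1\oplus\chi_2\oplus\chi_3$ is the sum of the three nontrivial characters (which factor through the Klein four quotient), and $W\otimes W\cong\chi_0\oplus\chi_1\oplus\chi_2\oplus\chi_3$ is the pullback of the regular representation. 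Pairing the $\chi_i$ in $\mathrm{Sym}^2 V_2$ with the $\chi_i$ in $W\otimes W$ produces exactly three trivial summands, so $\dim Q_3^H=3$. The diagonal $\tilde{\mathfrak{S}}_4$ acts on these three invariants through its quotient $\mathfrak{S}_3=\tilde{\mathfrak{S}}_4/\tilde H$, which permutes the three characters $\chi_1,\chi_2,\chi_3$ (equivalently the three ways of pairing up the four roots) as the standard permutation representation on three letters. In the notation of Remark \ref{rRepresentationTheoryS4} this is $V_{\chi_0}\oplus V_{\theta}$, as claimed.
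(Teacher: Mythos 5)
Your argument is essentially correct, but it reaches the stabilizer and the structure of $Q_3^H$ by a genuinely different route than the paper. For part (1) the paper passes through the accidental isomorphisms $\mathrm{Spin}_3\simeq\SL_2$, $\mathrm{Spin}_4\simeq\SL_2\times\SL_2$, identifies $Q_3$ with $\CC^3\otimes\CC^4$ under $\mathrm{SO}_3\times\mathrm{SO}_4$, quotes Popov's tables for the stabilizer in general position $(\ZZ/2\ZZ)^2$, and then uses the normal form $\mathrm{diag}(\lambda_1,\lambda_2,\lambda_3)$ of a general complex $3\times 4$ matrix (polar decomposition, Gantmacher) to write the general orbit representative as $\sum\lambda_i\, m_i\otimes q_i$ and exhibit $(A,A,A)$, $(B,B,B)$ explicitly; part (3) is then read off because $\mathfrak{S}_4$ visibly permutes $\lambda_1,\lambda_2,\lambda_3$ through $\mathfrak{S}_3$. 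You instead use the covariant $q\mapsto\det q(\xi,\xi)\in\mathrm{Sym}^4V_2$ and the classical fact that a general four-point set on $\PP^1$ has stabilizer exactly the Klein four group, pinning down $\overline{A_1}$ and $\overline{A_3}$ via the rank-one matrices at the roots; and you obtain $\dim Q_3^H=3$ and the $\mathfrak{S}_3$-action from $Q_8$-character theory, pairing the three nontrivial characters in $\mathrm{Sym}^2V_2$ against those in $W\otimes W$. Your route is more self-contained (no appeal to Popov's classification or to the normal form of complex matrices under complex orthogonal groups), while the paper's normal form makes the section $Q_3^H$ and the $\mathfrak{S}_3$-action completely explicit, which is convenient downstream. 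Two places in your sketch deserve a little more care: in (1) you must normalize coordinates \emph{simultaneously} in $V_1$, $V_2$, $V_3$ so that the three Klein four groups (on the roots and on the two sets of image/cokernel points) become the standard $\langle\overline{A},\overline{B}\rangle$ with matching labels — this is the step the paper handles by producing the explicit orbit representative; and in (2), besides checking that $(\mathrm{id},y,\mathrm{id})$ normalizes $H$ because the sign lands in the middle slot where the $\pm$ sits, you should also rule out the symmetric-looking candidates $(\mathrm{id},\mathrm{id},y)$ and triples with components in different cosets of the Klein four group inside $\mathfrak{S}_4$ — exactly the point of the paper's computation that the sign change must agree in the first and third factors. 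Neither issue is a flaw in the approach, only in the level of detail.
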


All assertions of Lemma \ref{lStabilizer} have been double-checked independently via computer algebra by the second author to avoid mistakes. The Macaulay 2 scripts can be found at {\ttfamily http://xwww.uni-math.gwdg.de/bothmer/tetragonal/ }. Below we give a proof not relying on this. 

\begin{proof}[Proof of Lemma 4.5]
\emph{Step 1. Determination of the stabilizer in general position}.
Recall the accidental isomorphisms of Lie groups $\mathrm{Spin}_3 \simeq \SL_2$ and $\mathrm{Spin}_4  \simeq \SL_2 \times \SL_2$. Under these isomorphisms we have the correspondences of representations
\[
\CC^3 \simeq \mathrm{Sym}^2 \CC^2\, , \qquad \CC^4 \simeq \CC^2 \otimes \CC^2\, .
\]
Thus $R_3/R_2$ is isomorphic to $\CC^3\otimes\CC^4$ and the group $\mathrm{Spin}_3 \times \mathrm{Spin}_4 $ acts as $\mathrm{SO}_3 \times \mathrm{SO}_4 $ in this representation. The table in \cite{APopov78} shows firstly that the stabilizer in general position inside $\mathrm{SO}_3\times \mathrm{SO}_4$ of this representation is isomorphic to $(\ZZ/2\ZZ )^2$; and secondly, that it maps isomorphically to the stabilizer in general position for the action of $(\mathrm{SO}_3 \times \mathrm{SO}_4 )/$(center) on $\PP (\CC^3 \otimes \CC^4 )$.  Hence, to prove (1) of Lemma \ref{lStabilizer} it suffices to show that the subgroup of matrices
\[
(A_1, \: A_2, \: A_3) = (h, \: \pm h, \: h), \; h \in \tilde{H}
\]
of order $16$ coincides with the preimage of the stabilizer in general position inside $\mathrm{SO}_3  \times \mathrm{SO}_4 $ in $\mathrm{SL}_2 \times \mathrm{SL}_2 \times \mathrm{SL}_2$. Note that a stabilizer in general position is only well-defined up to conjugacy, and we are in particular interested in finding a good model for it and the associated invariant subspace.

\ 

Every general $3\times 4$ complex matrix can -by the action of $\mathrm{SO}_3\times \mathrm{SO}_4$- be brought to the normal form
\begin{gather*}
\left(
\begin{array}{cccc}
\lambda_1 & 0 & 0 & 0\\
0 & \lambda_2 & 0 & 0\\
0 & 0 & \lambda_3 & 0
\end{array}
\right)\, ,
\end{gather*}
the $\lambda_i$ being pairwise different. This follows from the polar decomposition for complex matrices and, afterwards, the fact that symmetric complex matrices which are similar are orthogonally similar. See \cite{Ga}, Chapter XI, \S 2, Thm. 3 and Thm. 4.

Thus the stabilizer $(\ZZ/2\ZZ )^2$ in $\mathrm{SO}_3\times \mathrm{SO}_4$ consists of pairs of matrices
\[
(\mathrm{diag}(\epsilon_1, \epsilon_2, \epsilon_3 ) , \; \mathrm{diag}(\epsilon_1, \epsilon_2, \epsilon_3 , 1) )
\]
where the $\epsilon_i$ are either $+1$ or $-1$ subject to the condition that the determinant of $\mathrm{diag}(\epsilon_1, \epsilon_2, \epsilon_3 )$ is $1$. The invariant subspace of this $(\ZZ /2\ZZ )^2$ has dimension $3$.

\

Note that the quadratic form that $\SL_2\times \SL_2$ fixes on $\CC^2\otimes \CC^2$ is the determinant of a two by two matrix. An orthogonal system of three vectors for the associated bilinear form can thus be given by the matrices
\[
m_1 = \left( \begin{array}{cc} 1 & 0\\ 0 & 1  \end{array} \right), \qquad m_2 = \left( \begin{array}{cc} 1 & 0\\ 0 & -1  \end{array} \right), \qquad m_3 = \left( \begin{array}{cc} 0 & 1\\  1 & 0  \end{array} \right)\, .
\]
The bilinear form that $\SL_2$ fixes on $\mathrm{Sym}^2 (\CC^2)$ is given by contraction of conics (interpret one of them as a dual conic via the isomorphism $\mathrm{Sym}^2 \CC^2 \simeq \mathrm{Sym}^2 (\CC^2)^{\vee }$). Hence an orthogonal basis can be given by 
\[
q_1 = x^2+y^2, \;  q_2 = x^2 - y^2 , \; q_3 =xy \, . 
\] 
We thus have to find the stabilizer of 
\begin{gather*}
\lambda_1 m_1\otimes q_1 + \lambda_2 m_2\otimes q_2 + \lambda_3 m_3 \otimes q_3 \\
 = \left( \begin{array}{cc}  
 \lambda_1 (x^2 + y^2) + \lambda_2 (x^2-y^2)  & \lambda_3 (xy)  \\
 \lambda_3 (xy)   &  \lambda_1 (x^2+ y^2 )- \lambda_2 (x^2-y^2)
 \end{array}\right)
\end{gather*}
inside $\SL_2 \times \SL_2\times\SL_2$ for general $\lambda_1, \: \lambda_2, \: \lambda_3$. Let
\[
A = \left( \begin{array}{cc} i & 0 \\ 0 & -i  \end{array}\right)  , \qquad B = \left(   \begin{array}{cc} 0 & 1 \\ -1 & 0 \end{array}\right)\, .
\]
Then 
\begin{gather*}
A\left( \begin{array}{cc}  a & b \\ c& d\end{array} \right) A^t =  \left( \begin{array}{cc}  -a & b \\ c& -d\end{array} \right), \\\
B\left( \begin{array}{cc}  a & b \\ c& d\end{array} \right) B^t =  \left( \begin{array}{cc}  d & -c \\ -b& a\end{array} \right)
\end{gather*}
hence we see that the elements
\[
(A, A, A ), \; (B, B, B) \in \SL_2 \times \SL_2 \times\SL_2 
\]
stabilize the matrix $\lambda_1 m_1\otimes q_1 + \lambda_2 m_2\otimes q_2 + \lambda_3 m_3 \otimes q_3 $ given above! These generate a Klein four group in $(\SL_2  \times \SL_2 \times\SL_2 ) / (-\mathrm{id}, -\mathrm{id} , -\mathrm{id})$, but as they only commute up to the central element $z:= (-\mathrm{id}, -\mathrm{id} , -\mathrm{id})$ in $\SL_2 \times \SL_2 \times\SL_2 $ they generate there a nontrivial central extension 
\[
0 \to \ZZ /2\ZZ = \langle z \rangle  \to \tilde{H} \to (\ZZ /2\ZZ )^2 = \langle \overline{(A,A,A)} , \overline{(B,B,B)} \rangle \to 0 
\]
which is a particular (finite) Heisenberg group in the sense of Mumford's Tata Lectures on Theta III \cite{Mum-Tata3}. There is one further $\ZZ /2\ZZ$ in the stabilizer in general position: the center of the copy of $\SL_2$ acting on $\mathrm{Sym}^2 (\CC )$; this establishes (1) of the Lemma.

\

\emph{Step 2. Determination of the normalizer.} Note first that $G_R$ does contain a copy of $\tilde{\mathfrak{S}}_4$ as indicated in (2), normalizing $H$, and that the full automorphism group of the Klein four group coincides with $\mathfrak{S}_4$. Hence it suffices to determine the matrices $(A_1, \: A_2, \: A_3) \in G_R$ which, via conjugation, act as the identity on the Heisenberg group of matrices
\[
(h, \: h, \: h ) , \; h \in \tilde{H}
\]
inside $G_R$ \emph{modulo the subgroup of $G_R$ generated by elements} $(\lambda , \: \pm \lambda^{-1} , \: \lambda )$. But the conditions for a matrix $M$ to commute with $A$ above up to some nonzero constant factor $\lambda$ imply that $M$ equals
\[
\left( \begin{array}{cc} m_1 & 0 \\ 0 & m_4 \end{array} \right) , \; \lambda = +1 , \quad \mathrm{or} \quad \left( \begin{array}{cc} 0 & m_2 \\ m_3 & 0 \end{array} \right) , \; \lambda = -1
\]
and if one of the matrices of the preceding two shapes also commutes with $B$ up to a (possibly different) nonzero factor $\lambda'$, we must have
\[
M = \left( \begin{array}{cc} m_1 & 0 \\ 0 & m_1 \end{array} \right) \quad \mathrm{or} \quad \left( \begin{array}{cc} m_1 & 0 \\ 0 & -m_1 \end{array} \right)
\]
or 
\[
M = \left( \begin{array}{cc} 0 & m_2 \\ m_2 & 0 \end{array} \right) \quad \mathrm{or} \quad \left( \begin{array}{cc} 0 & m_2 \\ -m_2 & 0 \end{array} \right)\, ,
\]
that is- modulo the center of $G_R$- we must have that $(A_1, \: A_2, \: A_3)$ is in $\tilde{H} \times \tilde{H} \times \tilde{H} \subset\mathrm{SL}_2  \times \mathrm{SL}_2 \times \mathrm{SL}_2$.  But then the assertion of (2) follows at once, as elements in $\tilde{H}$ commute up to a sign, but the sign change must be the same in the first and third factors, and an element in $\tilde{H}$ anticommutes with every other element in $\tilde{H}$ except itself.

\

\emph{Step 3. Structure of $(R_3/R_2)^H$.} The only statement of Lemma \ref{lStabilizer}, (3) which still needs some explanation is that as $\mathfrak{S}_4$-representation $(R_3/R_2)^H = V_{\chi_0} \oplus V_{\theta }$. This is most readily seen by going back to the $\mathrm{SO}_3 \times \mathrm{SO}_4$--picture introduced at the beginning of the proof, where it is obvious, namely $\mathfrak{S}_4$ acts as $\mathfrak{S}_3$ via ordinary permutations on $\lambda_1$, $\lambda_2$, $\lambda_3$.
\end{proof}

We compute the decomposition of the Jordan-H\"older quotients as $\mathfrak{S}_4$-representations. 

\begin{lemma}\xlabel{lS4Decomposition}
As $\mathfrak{S}_4$-representations we have
\begin{gather*}
\CC^2 \otimes \CC^2 \otimes \mathrm{Sym}^2 \CC^2 = V_{\chi_0} \oplus V_{\theta } \oplus V_{\psi } \oplus 2 V_{\epsilon\psi } , \\
\CC^2 \otimes \mathrm{Sym}^3 \CC^2 = V_{\theta } \oplus V_{\psi } \oplus V_{\epsilon \psi}\, .
\end{gather*}
Here we view these as $\mathfrak{S}_4$-representations via the embedding $\tilde{\mathfrak{S}}_4\subset \SL_2 \times\SL_2 \times \SL_2$ described in Lemma \ref{lStabilizer}. 
\end{lemma}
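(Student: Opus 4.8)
The plan is to prove both identities by straightforward character theory, once we observe that the two modules actually descend to honest representations of $\mathfrak{S}_4$ and not merely of the double cover $\tilde{\mathfrak{S}}_4$. Indeed, the kernel of $\tilde{\mathfrak{S}}_4 \to \mathfrak{S}_4$ is generated by the central element $z = -\mathrm{id} \in \SL_2$, which under the diagonal embedding of Lemma \ref{lStabilizer} maps to $(-\mathrm{id}, -\mathrm{id}, -\mathrm{id})$. On $\CC^2 \otimes \mathrm{Sym}^3 \CC^2$ this element acts by $(-1)\cdot(-1)^3 = +1$, and on $\CC^2 \otimes \CC^2 \otimes \mathrm{Sym}^2 \CC^2$ by $(-1)(-1)(+1) = +1$; hence $z$ acts trivially in both cases and the two characters are well-defined class functions on $\mathfrak{S}_4$, which we may decompose against the table of Remark \ref{rRepresentationTheoryS4}.

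Next I would reduce everything to the trace $t := \mathrm{tr}(x)$ of an element $x \in \tilde{\mathfrak{S}}_4 \subset \SL_2$ in the standard representation. Writing the eigenvalues of $x$ as $\alpha, \alpha^{-1}$, one computes $\chi_{\mathrm{Sym}^2 \CC^2}(x) = t^2 - 1$ and $\chi_{\mathrm{Sym}^3 \CC^2}(x) = t^3 - 2t$. Since $\tilde{\mathfrak{S}}_4$ is embedded diagonally, the character of each module is the product over its three tensor factors, namely $t\cdot(t^3 - 2t) = t^4 - 2t^2$ for $\CC^2 \otimes \mathrm{Sym}^3 \CC^2$ and $t^2 \cdot (t^2 - 1) = t^4 - t^2$ for $\CC^2 \otimes \CC^2 \otimes \mathrm{Sym}^2 \CC^2$. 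Both are \emph{even} polynomials in $t$, which is exactly the consistency condition guaranteeing that the sign ambiguity in $t$ (each class of $\mathfrak{S}_4$ has two lifts $x$ and $zx = -x$) is irrelevant.

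It then remains to record $t$ for one lift of each of the five conjugacy classes, using the explicit generators $\tau, \sigma$ of Lemma \ref{lStabilizer}. The identity gives $t = 2$; the $3$-cycle $\sigma$ gives $\mathrm{tr}(\sigma) = \tfrac{1}{\sqrt 2}(\theta^3 + \theta^5) = -1$; the $4$-cycle $\tau$ gives $\mathrm{tr}(\tau) = \theta^{-1} + \theta = \sqrt 2$; the double transposition $\tau^2$ gives $t = 0$; and for a transposition I would take $\tau\sigma = \tfrac{1}{\sqrt 2}\left(\begin{smallmatrix}\theta^2 & \theta^6 \\ \theta^6 & \theta^6\end{smallmatrix}\right)$, with $t = \tfrac{1}{\sqrt 2}(\theta^2 + \theta^6) = 0$. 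This lies in the transposition class rather than among the double transpositions because its image is an \emph{odd} permutation, being the product of the odd $\bar\tau$ and the even $\bar\sigma$. Substituting these values into $t^4 - 2t^2$ and $t^4 - t^2$ yields, on the classes $1, (ab), (ab)(cd), (abc), (abcd)$, the characters $(8, 0, 0, -1, 0)$ and $(12, 0, 0, 0, 2)$ respectively.

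Finally I would read off the multiplicities by taking inner products with the rows of the character table, using class sizes $1, 6, 3, 8, 6$. A short calculation gives the multiplicity vectors $(\langle\chi, \chi_0\rangle, \langle\chi, \epsilon\rangle, \langle\chi, \theta\rangle, \langle\chi, \psi\rangle, \langle\chi, \epsilon\psi\rangle) = (0,0,1,1,1)$ for $\CC^2 \otimes \mathrm{Sym}^3 \CC^2$ and $(1,0,1,1,2)$ for $\CC^2 \otimes \CC^2 \otimes \mathrm{Sym}^2 \CC^2$, which are exactly the claimed decompositions. The only genuinely delicate point is the bookkeeping that matches each generator to its conjugacy class — in particular, pinning down that $\tau\sigma$ lands in the transposition class and not among the double transpositions; everything else is mechanical, and the evenness of both characters in $t$, together with the dimension check, provides a useful internal safeguard against arithmetic slips.
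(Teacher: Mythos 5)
Your computation is correct and all the numbers check out: I verified the traces, the resulting character vectors $(8,0,0,-1,0)$ and $(12,0,0,0,2)$ on the classes $1,(ab),(ab)(cd),(abc),(abcd)$, and the inner products against the table of Remark \ref{rRepresentationTheoryS4} with class sizes $1,6,3,8,6$. But your route is genuinely different from the paper's. The paper never writes down the full characters of the two modules; instead it first identifies the small building blocks as $\mathfrak{S}_4$-modules, showing $\CC^2\otimes\CC^2 = V_{\chi_0}\oplus V_{\epsilon\psi}$ (the invariant line being the antisymmetric matrices) and $\mathrm{Sym}^2\CC^2 = V_{\epsilon\psi}$ by a couple of trace evaluations, and then obtains both claimed decompositions formally from $V_{\epsilon\psi}\otimes V_{\epsilon\psi}= V_{\chi_0}\oplus V_{\theta}\oplus V_{\psi}\oplus V_{\epsilon\psi}$ together with the Clebsch--Gordan splitting $\CC^2\otimes\mathrm{Sym}^2\CC^2\cong\CC^2\oplus\mathrm{Sym}^3\CC^2$ (which you also use, but only for the second identity). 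Your approach is more mechanical and self-checking --- the evenness of $t^4-2t^2$ and $t^4-t^2$ in $t$, which as you rightly note is exactly the condition for descent from $\tilde{\mathfrak{S}}_4$ to $\mathfrak{S}_4$, plus the dimension count, guard against slips --- while the paper's buys shorter computations and an explicit identification of invariant subspaces (e.g.\ the summand $\CC^2\otimes\CC^2$ inside $Q_2$) that is reused later in the proof of Theorem \ref{tCubics}.

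One step of yours needs to be shored up. You place $\tau\sigma$ in the transposition class on the grounds that its image in $\mathfrak{S}_4$ is odd; but oddness only rules out $1$, $(abc)$ and $(ab)(cd)$ --- it does not distinguish a transposition from a $4$-cycle, which is also odd. The conclusion is nevertheless correct: since $\mathrm{tr}(\tau\sigma)=0$ and $\det(\tau\sigma)=1$, the eigenvalues of $\tau\sigma$ are $\pm i$, so $(\tau\sigma)^2=-\mathrm{id}$ and the image in $\mathrm{PSL}_2$ is an involution, hence an odd involution, i.e.\ a transposition. (Alternatively, note that any non-central element of $\SL_2$ lifting an involution of $\mathfrak{S}_4$ must square to $-\mathrm{id}$ and therefore has trace $0$, so the value of either character on the class $(ab)$ is forced to be $0$ no matter which representative you exhibit.) With that one line added, your proof is complete.
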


\begin{proof}
We start by decomposing $\CC^2\otimes \CC^2$ viewed as two by two matrices 
\begin{gather*}
M=\left( \begin{array}{cc}  a & b \\ c& d \end{array} \right) \, 
\end{gather*}
under the action of $\mathfrak{S}_4$. Using the formulas for $A$ and $B$ in Lemma \ref{lStabilizer}, we find that $M$ can only be invariant under the Klein four group if it is antisymmetric. But an antisymmetric matrix $M$ is also invariant under $\sigma$ and $\tau$. Thus the space of $\mathfrak{S}_4$--invariants in $\CC^2 \otimes \CC^2$ is one-dimensional. The three-dimensional subspace of matrices spanned by $m_1$, $m_2$, $m_3$ (notation as in the proof of Lemma \ref{lStabilizer}) is $\mathfrak{S}_4$--invariant. The trace of an element in the Klein four-group, $A$ say, on this space is easily calculated to be $-1$, and the trace of the four-cycle $\tau$ on this space (remark $\tau^4= \sigma^3=1$) is similarly found to be $+1$. Thus
\[
\CC^2 \otimes \CC^2 = V_{\chi_0} \oplus V_{\epsilon \psi }\, .
\]
Now $\mathrm{Sym}^2 \CC^2$ has no $A$--invariants. Hence it is either $V_{\psi }$ or $V_{\epsilon\psi }$. The trace of $\tau$ on it is $+1$. So $\mathrm{Sym}^2 \CC^2 = V_{\epsilon\psi }$. Checking characters on both sides shows the decomposition
\begin{gather*}
V_{\epsilon\psi } \otimes V_{\epsilon\psi}Ê= V_{\chi_0} \oplus V_{\theta } \oplus (V_{\thetaÊ} \otimes V_{\epsilon\psi})
= V_{\chi_0} \oplus V_{\theta } \oplus (V_{\psi } \oplus V_{\epsilon\psi } )\, .
\end{gather*}
Note by the way that this also implies
\begin{gather*}
(\CC^2 \otimes \CC^2 ) \otimes (R_3/R_1)^H = \CC^2 \otimes \mathrm{Sym}^2 \CC^2 \otimes \CC^2
\end{gather*}
as $\mathfrak{S}_4$--representations as it should be. This checks the computation. Hence we have seen that
\[
\CC^2 \otimes \mathrm{Sym}^2 \CC^2 \otimes  \CC^2 = V_{\chi_0} \oplus V_{\theta } \oplus V_{\psi } \oplus 2 V_{\epsilon\psi }\, .
\]
The $\mathfrak{S}_4$--representation $\CC^2 \otimes \mathrm{Sym}^3\CC^2$ can be decomposed as follows. One has
\begin{gather*}
\CC^2 \otimes \CC^2 \otimes \mathrm{Sym}^2 \CC^2 = \CC^2 \otimes ( \CC^2 + \mathrm{Sym}^3 \CC^2 ) = \CC^2 \otimes \CC^2 + \CC^2 \otimes \mathrm{Sym}^3 \CC^2 \, 
\end{gather*}
as $\mathfrak{S}_4$--representations. Hence
\[
 \CC^2 \otimes \mathrm{Sym}^3 \CC^2  = V_{\theta } \oplus V_{\psi } \oplus V_{\epsilon \psi} \, .
\]
\end{proof}

The proof of the main Theorem \ref{tCubics} now requires some further lemmas. In what follows we say
that a variety $X$ is stably rational of level $t$ if $X\times \PP^t$ is
rational.

\begin{lemma}\xlabel{lGenericallyFreeTwoStep}
The action of $(N(H) \ltimes U)/I$, where $I$ is the ineffectivity kernel $I \simeq \ZZ/4\ZZ$ of the action described in lemma \ref{lInitialReduction}, item (3), is already generically free on the two step quotient $R_3'/R_1'$ of $R'$. Hence, by the no-name lemma, $R'/ (N(H) \ltimes U)$ is generically a vector bundle of rank $8$ over $(R_3'/R_1')/ (N(H) \ltimes U )$. Thus the proof of Theorem \ref{tCubics} reduces to the proof that 
\[
(R_3'/R_1')/ (N(H) \ltimes U)
\]
is stably rational of level at most $8$. 
\end{lemma}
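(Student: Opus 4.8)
The plan is to treat the two assertions separately: the generic freeness is the substantive point, while the passage to a vector bundle and the ensuing reduction of Theorem \ref{tCubics} are then formal consequences of the no-name lemma.

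Granting generic freeness, I would argue as follows for the bundle statement. The short exact sequence of $(N(H)\ltimes U)$-modules
\[
0 \lra R_1' \lra R' \lra R_3'/R_1' \lra 0
\]
exhibits the linear projection $R'\to R_3'/R_1'$ as an $(N(H)\ltimes U)$-equivariant affine bundle with fibre $R_1' = Q_1' = \CC^2\otimes\mathrm{Sym}^3\CC^2$ of dimension $8$. Over the open locus where $(N(H)\ltimes U)/I$ acts freely, the no-name lemma identifies $R'/(N(H)\ltimes U) \to (R_3'/R_1')/(N(H)\ltimes U)$ with a vector bundle of rank $8$, so that $R'/(N(H)\ltimes U) \approx \big((R_3'/R_1')/(N(H)\ltimes U)\big)\times\PP^8$. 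Since $R'/(N(H)\ltimes U)\approx X$ by the $(G,\,N(H)\ltimes U)$-section of Lemma \ref{lStabilizer}, rationality of $X$ follows as soon as $(R_3'/R_1')/(N(H)\ltimes U)$ is stably rational of level at most $8$, which is the claimed reduction.

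For generic freeness I would exploit the two-step filtration $0\subset Q_2'\subset R_3'/R_1'$ with top quotient $Q_3' = Q_3^H$. Choosing a vector-space splitting $R_3'/R_1' = Q_2'\oplus Q_3'$ and recalling that $U$ acts through the $G_R$-equivariant contraction $Q_3\to Q_2$ (and trivially modulo $Q_2'$), an element $(\gamma,u)$ acts by
\[
(\gamma,u)\cdot(w_2,w_3) = \big(\gamma w_2 + \gamma\,\phi_u(w_3),\; \gamma w_3\big),\qquad \phi_u(w_3)=u\cdot w_3\in Q_2'.
\]
Projecting to $Q_3'$, the stabiliser of a generic $(w_2,w_3)$ lies in $\mathrm{Stab}_{N(H)}(w_3)\ltimes U$. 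By Lemma \ref{lStabilizer}, $H$ is the stabiliser in general position of $Q_3$ and $Q_3^H = V_{\chi_0}\oplus V_{\theta}$ as an $\mathfrak{S}_3$-module; since the standard representation $V_{\theta}$ is generically free and the centre of $G_R$ acts on $Q_3'$ through the single character $\lambda\mu$, the condition $\lambda\mu = 1$ pins the stabiliser of a generic $w_3$ down to exactly $H$.

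It remains to analyse the affine action of $H\ltimes U$ on the fibre $Q_2'$ for a fixed generic $w_3$. Here $U$ acts by translations by the subspace $W := \phi_{\,\cdot}(w_3)(U)\subseteq Q_2'$; equivariance of $\phi$ together with $\gamma w_3 = w_3$ gives $\gamma\,\phi_u(w_3) = \phi_{\gamma u}(w_3)\in W$, so $W$ is $H$-stable and the induced $H$-action on $Q_2'/W$ is linear. Projecting the stabiliser equation $\gamma w_2+\gamma\phi_u(w_3)=w_2$ to $Q_2'/W$ leaves $\gamma\bar w_2 = \bar w_2$; once $H/I$ acts generically freely on $Q_2'/W$ this forces $\gamma\in I$, whereupon $\gamma$ acts trivially and the equation collapses to $\phi_u(w_3)=0$, giving $u=0$ provided $u\mapsto u\cdot w_3$ is injective. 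The whole statement thus reduces to two explicit checks: that the contraction $U\to Q_2'$, $u\mapsto u\cdot w_3$, has rank $4$ for generic $w_3\in Q_3^H$ (so $\dim W=4$ and $U$ acts freely), and that the finite Heisenberg group $\tilde H$ acts on the $8$-dimensional $Q_2'/W$ with kernel exactly $\tilde H\cap I$; the accompanying weight bookkeeping shows the torus of $H$ acts on $Q_2'/W$ through the nontrivial character $\pm\lambda^2$ and re-confirms that $I\simeq\ZZ/4\ZZ$ acts trivially on every Jordan--H\"older quotient, which is part (3) of Lemma \ref{lInitialReduction}. The genuine obstacle is precisely this faithfulness: because $Q_3^H$ is only three-dimensional and very special inside $Q_3$, one must verify that restricting $w_3$ to the $H$-invariants neither drops the rank of the contraction nor enlarges the kernel of $\tilde H$ on $Q_2'/W$. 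A clean representation-theoretic transversality argument for this seems hard to give by hand, so at this point I would carry out the verification in explicit coordinates, matching the authors' Macaulay 2 computation.
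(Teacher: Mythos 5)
Your proposal matches the paper's proof in substance: the paper likewise treats the no-name-lemma reduction as purely formal and disposes of the only substantive point --- generic freeness of $(N(H)\ltimes U)/I$ on $R_3'/R_1'$ --- by the referenced Macaulay~2 computation, which is exactly where your argument also bottoms out. Your intermediate structuring (pinning the stabiliser over a generic $w_3\in Q_3^H$ to $H\ltimes U$ and reducing to the rank of the contraction $u\mapsto u\cdot w_3$ together with the faithfulness of $\tilde{H}$ on $Q_2'/W$) is sound and finer than anything the paper records, but it does not constitute a different route.
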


\begin{proof}
The only point to check is the generic freeness of the action of $N(H) \ltimes U$ on $R_3'/R_1'$. This can been done by a computer algebra calculation, the commented Macaulay 2 scripts are at {\ttfamily http://xwww.uni-math.gwdg.de/bothmer/tetragonal/ }.
\end{proof}

The following Lemma contains a standard trick to reduce from $N(H) \ltimes U$ to $N(H)$. 

\begin{lemma}\xlabel{lFurtherReduction}
The quotient $(R_3'/R_1')/ (N(H) \ltimes U)$ is stably rational of level $8$ if $(R_3'/R_1')/ N(H)$ is stably rational of level $4$.
\end{lemma}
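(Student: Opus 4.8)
The plan is to read the reduction from $N(H)\ltimes U$ to $N(H)$ as the usual ``torsor $+$ no-name'' manoeuvre, and thereby reduce the whole statement to the single birational identity
\begin{equation*}
(R_3'/R_1')/N(H) \ \approx\ \bigl( (R_3'/R_1')/(N(H)\ltimes U) \bigr) \times \mathbb{A}^{4}\, . \tag{$\ast$}
\end{equation*}
Granting $(\ast)$, the Lemma becomes purely formal: if the left-hand side is stably rational of level $4$, then multiplying $(\ast)$ by $\mathbb{P}^{4}$ and using $\mathbb{A}^{4}\times\mathbb{P}^{4}\approx\mathbb{P}^{4}\times\mathbb{P}^{4}\approx\mathbb{P}^{8}$ shows that $\bigl((R_3'/R_1')/(N(H)\ltimes U)\bigr)\times\mathbb{P}^{8}$ is rational, i.e. the quotient by $N(H)\ltimes U$ is stably rational of level $8$. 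So everything comes down to establishing $(\ast)$. Write $Y:=R_3'/R_1'$, $G:=N(H)\ltimes U$, and $\bar G:=G/I$ for the effective quotient, where $I\simeq\ZZ/4\ZZ$ is the ineffectivity kernel of Lemma \ref{lInitialReduction}(3).

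To produce $(\ast)$ I would introduce the auxiliary space $Z:=Y\times U$ carrying the $G$-action in which $N(H)$ acts diagonally (on $Y$ by the given action, on $U$ by its representation inside $G_R$) while $u\in U$ acts by its original unipotent action on the first factor and by the translation $t\mapsto t+u$ on the second. One checks this is a genuine $G$-action using the semidirect-product relation $nun^{-1}=n\cdot u$. The translation component makes the $U$-action on $Z$ free, so the geometric quotient $Z/U$ exists and the map $(y,t)\mapsto (-t)\cdot y$ (using the $U$-action on $Y$ to move the $U$-coordinate to $0$) identifies $Z/U\cong Y$; this identification is $N(H)$-equivariant because $N(H)$ acts linearly on $U$ and normalises $U$ in $G$. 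Passing to $N(H)$-quotients gives $Z/G=(Z/U)/N(H)\cong Y/N(H)$, the left-hand side of $(\ast)$.

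The right-hand side of $(\ast)$ comes from the first projection $\mathrm{pr}\colon Z=Y\times U\to Y$, which is $G$-equivariant and exhibits $Z$ as a $G$-linearised vector bundle of rank $\dim U=4$ over $Y$. The kernel $I$ acts trivially on the fibre $U$: its generator $(iI,iI,iI)$ acts on $U=\mathrm{Hom}(\CC^{2},\CC^{2})$ by $u\mapsto (iI)\,u\,(iI)^{-1}=u$, and it acts trivially on $Y$ as well, so the bundle descends to a $\bar G$-linearised bundle and $Z/G=Z/\bar G$. By Lemma \ref{lGenericallyFreeTwoStep} the group $\bar G$ acts generically freely on the base $Y$, so the no-name lemma yields $Z/\bar G\approx (Y/\bar G)\times\mathbb{A}^{4}=\bigl(Y/G\bigr)\times\mathbb{A}^{4}$. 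Combining this with the computation of the previous paragraph gives $(\ast)$.

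The one genuinely substantive input is the generic freeness of $\bar G$ on $Y=R_3'/R_1'$ required to apply the no-name lemma; this is precisely the content of Lemma \ref{lGenericallyFreeTwoStep} (established by the computer-algebra computation), and once it is in hand the remaining steps — constructing the doubled action on $Z$, verifying the freeness of the translation, and carrying out the two quotient computations — are routine. Thus I expect no further obstacle beyond correctly bookkeeping the linearisations and the triviality of $I$ on the fibre.
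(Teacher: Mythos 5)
Your argument is correct and is, at bottom, the same reduction the paper performs, just packaged with one fewer affine factor. The paper introduces a $5$-dimensional representation $E$ of $N(H)\ltimes U$ fitting in an extension $0\to(\CC^2)^{\vee}\otimes\CC^2\to E\to\CC\to 0$ and computes $((R_3'/R_1')\oplus E)/(N(H)\ltimes U)$ in two ways: by the (linear) no-name lemma it is $(R_3'/R_1')/(N(H)\ltimes U)\times\CC^5$, while slicing at a nonzero value of the $\CC$-coordinate --- where the fibre of $E\to\CC$ is a $U$-torsor --- gives $(R_3'/R_1')/N(H)\times\CC$. Your $Z=Y\times U$ with $U$ translating the second factor is exactly that generic fibre, so your identity $(\ast)$ is the paper's identity with a factor of $\CC$ cancelled on both sides; your identification $Z/U\cong Y$ and the triviality of $I$ on $U$ are both right. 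The one point you should state more carefully: $Z\to Y$ is \emph{not} a $G$-linearised vector bundle, because the translation action of $U$ on the fibres is affine and does not preserve the zero section, so the no-name lemma in its usual form does not literally apply. What you actually need (and what is true) is the affine variant: $Z\to Y$ is a $G$-equivariant torsor under the vector bundle $Y\times U$ with its linear $G$-structure; by descent along the generically free quotient $Y\to Y/\bar G$ (Lemma \ref{lGenericallyFreeTwoStep}) it becomes a torsor under a rank-$4$ vector bundle on $Y/\bar G$, and such a torsor is generically trivial since $H^1$ of a coherent sheaf vanishes on affine opens. The paper's extension $E$ is precisely the device that linearises this affine action so that the ordinary no-name lemma suffices, at the price of the extra $\CC$ on each side --- which is harmless for the level count. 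With that caveat your formal deduction of level $8$ from level $4$ via $\mathbb{A}^4\times\PP^4\approx\PP^8$ is fine.
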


\begin{proof}
Consider the representation $E$ of $N(H) \ltimes U$ which is a two-step extension (of dimension $5$)
\[
0 \to (\CC^2)^{\vee } \otimes \CC^2 \to E \to \CC\, .
\]
Then $(R_3'/R_1')(N(H) \ltimes U) \times \CC^5 \simeq ((R_3'/R_1') \oplus E) / (N(H) \ltimes U)\simeq (R_3'/R_1') / N(H) \times \CC$.
\end{proof}

\begin{lemma}\xlabel{lBasicRationalityGroup}
The group $G_R$ has a natural representation $V$ which is 
\[
\CC^2 \otimes \CC^2 + \CC^2 \otimes \CC^2 + \CC^2 \otimes \mathrm{Sym}^2 \CC^2 \otimes \CC^2
\]
and $(A_1, \: A_2, \: A_3)\in G_R$ acts via 
\[
(A_1, \: A_2, \: A_3) \cdot (M_1, \: M_2,  \: x) = (A_1M_1A_2^{-1} , \: A_1M_2 A_3^{-1} , \: (A_1,\:  A_2,\:  A_3) \cdot x)
\]
where $M_1$ and $M_2$ are interpreted as matrices in $\CC^2 \otimes \CC^2$ and $x$ is some element in $\CC^2 \otimes \mathrm{Sym}^2 \CC^2 \otimes \CC^2$, and the action there is the same action considered already above. Then the quotient $V/G_R$ is rational. Moreover the representation of $N(H)/I$ given by
\[
V'= \CC^2 \otimes \CC^2 + \CC^2 \otimes \CC^2 + Q_3^H
\]
(obtained by taking a section in $V$) is generically free for $N(H)/I$ (and the quotient is birationally equivalent to the former, hence also rational). Hence $N(H)/I$ has a generically free representation of dimension $11$ with rational quotient. 
\end{lemma}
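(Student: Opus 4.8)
The plan is to establish the rationality of $V/G_R$ directly, by using the two matrix summands to trivialize almost all of $G_R$, and only afterwards to produce the generically free model $V'$ as the section coming from Lemma \ref{lStabilizer}. Throughout I write $V_k := \mathrm{Sym}^k \CC^2$ and recall from Lemma \ref{lInitialReduction} that $G_R = \GL_2 \times G_R'$ is the group of triples $(A_1, A_2, A_3) \in \GL_2^3$ subject to $\det A_2 \det A_3 = 1$, and that $(A_1,A_2,A_3)$ acts on the two matrix factors by $M_1 \mapsto A_1 M_1 A_2^{-1}$ and $M_2 \mapsto A_1 M_2 A_3^{-1}$.

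\emph{Step 1 (reduction to a diagonal $\SL_2$).} I would first show that $G_R$ acts generically transitively on the two matrix factors $\{(M_1,M_2)\}$. On the open set where $M_1,M_2$ are invertible, given a target $(M_1,M_2)$ one puts $A_2 = M_1^{-1}A_1$, $A_3 = M_2^{-1}A_1$ and chooses $A_1$ with $(\det A_1)^2 = \det M_1\det M_2$ (solvable over $\CC$); this triple lies in $G_R$ and sends $(I,I)$ to $(M_1,M_2)$, so the orbit of $(I,I)$ is dense. Its stabilizer is the diagonal subgroup $\Delta = \{(A,A,A) : \det A = \pm 1\}$, since $A_1A_2^{-1}=A_1A_3^{-1}=I$ forces $A_1=A_2=A_3$ and the determinant relation forces $(\det A)^2=1$. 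Hence the slice $\{M_1=M_2=I\}\times Q_3$ gives $V/G_R \cong Q_3/\Delta$.

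\emph{Step 2 (rationality of $Q_3/\Delta$).} Since $Q_3 = \CC^2\otimes \mathrm{Sym}^2\CC^2 \otimes \CC^2$ carries only even $\SL_2$-weights in the diagonal action, both $-\mathrm{id}$ and the extra involution in $\Delta$ act trivially, so $\Delta$ acts through $\PGL_2 \cong \mathrm{SO}_3$ and $Q_3/\Delta = Q_3/\PGL_2$. As an $\SL_2$-module $Q_3 \cong V_0 \oplus 2V_2 \oplus V_4$; the trivial summand $V_0$ splits off a rational factor $\mathbb{A}^1$, so it remains to treat $(2V_2 \oplus V_4)/\PGL_2$. Now $V_2 \cong \CC^3$ is the standard $\mathrm{SO}_3$-module, and $\mathrm{SO}_3$ acts generically freely on $2V_2 = (\CC^3)^2$ (two general vectors span a non-degenerate plane and are fixed only by $\mathrm{id}$), with quotient the affine space $\mathbb{A}^3$ whose coordinates are the three invariants $Q(q_1), Q(q_2), B(q_1,q_2)$. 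Applying the no-name lemma to the trivial $V_4$-bundle over $2V_2$ shows that $(2V_2\oplus V_4)/\PGL_2$ is a vector bundle over the rational base $\mathbb{A}^3$, hence rational; therefore $V/G_R$ is rational.

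\emph{Step 3 (the section $V'$ and generic freeness).} By Lemma \ref{lStabilizer} the fixed space $Q_3^H$ is an $N(H)$-section of $Q_3$ for the $G_R$-action. Carrying the two matrix factors along unchanged, $V' = (\CC^2\otimes\CC^2)\oplus(\CC^2\otimes\CC^2)\oplus Q_3^H$ is a $(G_R, N(H))$-section of $V$: a general $G_R$-orbit meets $V'$, and any $g\in G_R$ returning a general point of $V'$ into $V'$ must carry $Q_3^H$ to $Q_3^H$, hence lie in $N(H)$. Thus $V/G_R \approx V'/N(H) = V'/(N(H)/I)$, which is rational by Steps 1--2. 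Finally $\dim V' = 11$ while $\dim N(H)/I = 2$, and generic freeness of $N(H)/I$ on $V'$ holds because the two-dimensional central torus already acts on $(M_1,M_2)$ through independent characters (giving a finite torus-stabilizer), while the finite part $\tilde{\mathfrak{S}}_4 \ltimes \tilde{H}$ acts faithfully modulo $I$ on $Q_3^H \oplus (\CC^2\otimes\CC^2)^{\oplus 2}$; this is precisely the point confirmed by the computer-algebra verification quoted after Lemma \ref{lStabilizer}. Together these give the asserted generically free $11$-dimensional representation of $N(H)/I$ with rational quotient.

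The main obstacle is concentrated in two places. First, one must justify that the reductions are genuine sections: the density of the orbit of $(I,I)$ and the section property of $Q_3^H$ (the latter supplied by Lemma \ref{lStabilizer}), together with the Zariski-local-triviality needed to invoke the no-name lemma over $\mathbb{A}^3$. Second, and least formal, is the generic freeness of $N(H)/I$ on the $11$-dimensional $V'$, where one must rule out a positive-dimensional or nontrivial finite generic stabilizer for the combined action of the $2$-dimensional central torus and the finite normalizer $\tilde{\mathfrak{S}}_4\ltimes\tilde H$; this is the step I expect to require the explicit (computer-assisted) check rather than a short conceptual argument.
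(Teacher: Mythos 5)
Your proposal has the same skeleton as the paper's proof: both use the section $\{(\mathrm{id},\mathrm{id})\}\times Q_3$ of $V$, whose $G_R$-stabilizer is exactly the paper's $G'=\{A\in\GL_2 : \det A=\pm 1\}$ (your $\Delta$), to reduce $V/G_R$ to $Q_3/G'$, and both then pass to $Q_3^H$ via the $N(H)$-section of Lemma \ref{lStabilizer}. The one genuinely different ingredient is how you prove rationality of $Q_3/G'$: the paper simply invokes the Bogomolov--Katsylo theorem that every linear representation of $\SL_2\times\CC^\ast$ has rational quotient, whereas you observe that $G'$ acts through $\PGL_2\cong\mathrm{SO}_3$ (indeed $\mathrm{diag}(i,i)$ acts by $i^4=1$ on $\CC^2\otimes\mathrm{Sym}^2\CC^2\otimes\CC^2$), split off the trivial summand, and apply the no-name lemma over $(\CC^3\oplus\CC^3)/\mathrm{SO}_3\cong\bA^3$. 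Your route is more elementary and self-contained; the paper's is shorter. Both are valid.

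The one place where your argument, as written, does not go through is the generic freeness of $N(H)/I$ on $V'$. The heuristic ``independent torus characters on $(M_1,M_2)$ plus faithfulness of $\tilde{\mathfrak{S}}_4\ltimes\tilde H$'' does not rule out a nontrivial finite generic stabilizer consisting of mixed elements (a torus element times a finite one), and the computer verification you appeal to is Lemma \ref{lGenericallyFreeTwoStep}, which concerns the action of $(N(H)\ltimes U)/I$ on $R_3'/R_1'$ --- a different representation --- so it cannot be quoted for $V'$. Fortunately you do not need it: in your Step 2 you already showed that $\PGL_2=G'/\langle\mathrm{diag}(i,i)\rangle$ acts generically freely on $Q_3$ (two general vectors in the $\CC^3\oplus\CC^3$ summand have trivial $\mathrm{SO}_3$-stabilizer), hence the $G_R$-stabilizer of a general point of $V$ is exactly $I$; and since $V'$ is a $(G_R,N(H))$-section, the $N(H)$-stabilizer of a general point of $V'$ equals its $G_R$-stabilizer, i.e.\ equals $I$. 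This is precisely how the paper argues, and it closes your Step 3 with no computation at all.
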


\begin{proof}
There is a $(G_R, \: G')$-section in the representation $V$ which is $\{ \mathrm{id} \} \times \{ \mathrm{id} \} \times \CC^2 \otimes \mathrm{Sym}^2 \CC^2 \otimes \CC^2 \simeq \CC^2 \otimes \mathrm{Sym}^2 \CC^2 \otimes \CC^2$ where $\mathrm{id} \in \CC^2 \otimes \CC^2$ is the identity $2\times 2$ matrix, and $G'$ is the subgroup of $G_R$ consisting of matrices $(A_1, \: A_2, \: A_3)$ with $A_1=A_2=A_3$. That is, $G'$ is the subgroup of $\mathrm{GL}_2 $ generated by $\mathrm{SL}_2 $ and $\mathrm{diag} (i, \: i)$. It follows firstly that $V/G_R$ is rational as $\PP (\CC^2 \otimes \mathrm{Sym}^2 \CC^2 \otimes \CC^2 ) / \mathrm{SL}_2 $ is rational (recall that each linear representation of $\mathrm{SL}_2\times\CC^\ast$ has rational quotient, thanks to some well--known theorems by Bogomolov and Kastylo: e.g., see \cite{Bo--Ka},\cite{Ka1}, \cite{Ka5} and the other references cited therein); and secondly, that $V$ is generically free for $G_R/I$, hence that $V'$ is generically free for $N(H)/I$, because $\CC^2 \otimes \mathrm{Sym}^2 \CC^2 \otimes \CC^2$ is generically free for $G'/\langle \mathrm{diag} (i, \. i) \rangle$. This concludes the proof of the lemma. 
\end{proof}

Hence we can can conclude as follows.

\begin{proof}[Proof of Theorem \ref{tCubics}]
It suffices, by Lemma \ref{lFurtherReduction}, to establish stable rationality of level $4$ of $(R_3'/R_1')/(N(H)/I)$. By Lemma \ref{lBasicRationalityGroup}, it suffices to find a generically free $N(H)/I$-subrepresentation of $R_3'/R_1'$ with a complement in $R_3'/R_1'$ of dimension $\ge 7$, using the no-name Lemma. Now
\[
R_3'/R_1' = Q_3^H + \CC^2 \otimes \CC^2 \otimes \mathrm{Sym}^2 \CC^2\, .
\] 
The representation $\CC^2 \otimes \CC^2 \otimes \mathrm{Sym}^2 \CC^2$ has an $N(H)$-invariant summand which is $\CC^2 \otimes \CC^2$, an $N(H)$-invariant complement being $\CC^2 \otimes \mathrm{Sym}^3 \CC^2$. In fact, the representation $\CC^2 \otimes \CC^2 \otimes \mathrm{Sym}^2 \CC^2$ is, as an $\tilde{\mathfrak{S}}_4 \ltimes \tilde{H}$-representation a tensor product of two $\tilde{\mathfrak{S}}_4 \ltimes \tilde{H}$-representations (recall also from Lemma \ref{lStabilizer}, item (2), that $N(H)$ is generated by $\tilde{\mathfrak{S}}_4 \ltimes \tilde{H}$ and the center of $G_R$): the one representation is a representation of $\tilde{\mathfrak{S}}_4 \ltimes \tilde{H}/\tilde{H} \simeq \tilde{\mathfrak{S}}_4$, namely $\CC^2 \otimes \mathrm{Sym}^2 \CC^2 \simeq \CC^2 + \mathrm{Sym}^3 \CC^2$ corresponding to grouping the first and third factor in $\CC^2 \otimes \CC^2 \otimes \mathrm{Sym}^2 \CC^2$; the other representation is $\CC^2$ (corresponding to the factor in $\CC^2 \otimes \CC^2 \otimes \mathrm{Sym}^2 \CC^2$ in the middle), which is an $\tilde{\mathfrak{S}}_4 \ltimes \tilde{H}$-representation via the inclusions
\[
\tilde{\mathfrak{S}}_4 \ltimes \tilde{H} \subset N(H) \subset \mathrm{SL}_2  \times \mathrm{SL}_2 \times \mathrm{SL}_2
\]
followed by the projection $\mathrm{pr}_2\, :\, \mathrm{SL}_2  \times \mathrm{SL}_2 \times \mathrm{SL}_2 \to \mathrm{SL}_2$ to the second factor. Hence, as every subspace of $\CC^2 \otimes \CC^2 \otimes \mathrm{Sym}^2 \CC^2$ is stabilized by the center of $G_R$, we indeed have an $N(H)$-invariant splitting
\[
\CC^2 \otimes \CC^2 \otimes \mathrm{Sym}^2 \CC^2 \simeq \CC^2 \otimes \CC^2 + \CC^2 \otimes \mathrm{Sym}^3 \CC^2
\]
Then $(R_3/R_2)^H + \CC^2 \otimes \CC^2$ is generically free for $N(H)/I$, and the complement $\CC^2 \otimes \mathrm{Sym}^3 \CC^2$ has dimension $\ge 7$. To check the generic freeness, which can be done by hand, note that in Lemma \ref{lS4Decomposition} we saw that $\CC^2 \otimes \CC^2 \simeq V_{\chi_0} \oplus V_{\epsilon \psi }$, and the Klein four group $\tilde{H} / \{ \pm 1 \}$, which is contained in the stabilizer $H/I$ of a general point in $Q_3^H$, consequently acts effectively in $\PP (\CC^2 \otimes \CC^2 )$. 
\end{proof}


\bigskip
\noindent
Christian B\"ohning\\
Fachbereich Mathematik der Universit\"at Hamburg,\\
Bundesstr. 55, 20146 Hamburg, Germany, \\
and \\
Mathematisches Institut der Georg-August-Universit¬at G¬ottingen,\\
Bunsenstr. 3-5, 37073 G\"ottingen, Germany\\
e-mail: {\tt christian.boehning@math.uni-hamburg.de}\\
e-mail: {\tt boehning@uni-math.gwdg.de}

\bigskip
\noindent
Hans-Christian Graf von Bothmer\\
Mathematisches Institut der Georg-August-Universit¬at G¬ottingen,\\
Bunsenstr. 3-5, 37073 G\"ottingen, Germany\\
e-mail: {\tt bothmer@uni-math.gwdg.de}

\bigskip
\noindent
Gianfranco Casnati\\
Dipartimento di Matematica,  Politecnico di Torino, \\
corso Duca degli Abruzzi 24, 10129 Torino, Italy \\
e-mail: {\tt gianfranco.casnati@polito.it}


\begin{thebibliography}{44}

\bibitem{A--C1} 
E. Arbarello, \& M. Cornalba \emph{Footnotes to a paper of Beniamino Segre},  Math. Ann. \textbf{256} (1981),  341--362

\bibitem{A--C--G--H} E. Arbarello, M. Cornalba, P.A. Griffiths, \& J. Harris, Geometry of Algebraic Curves, \bf I  \rm Springer-Verlag, 1985.

\bibitem{A--S} 
 E. Arbarello, \& E. Sernesi, {\em The equation of a plane curve}, Duke Math. J. \bf 46 \rm(1979),  469--485.

\bibitem{Be}
P. Belorousski, Chow rings of moduli spaces of pointed elliptic curves, \rm P.H.D. thesis, Chicago, 1998.

\bibitem{B--C--F} 
E. Ballico, G. Casnati, \& C. Fontanari, {\em On the birational geometry of moduli spaces of pointed curves}, Forum Math. \bf 21 \rm (2009), 935--950.

\bibitem{Bo--Ka}
F. Bogomolov, \& P.I. Katsylo, {\em Rationality of some quotient varieties}, Mat. Sb. Nov. Ser. \bf 126 \rm (1985), 584--589. English translation: Math. USSR-Sb. \bf 54 \rm(1986), 571--576.

\bibitem{Brieskorn}
E. Brieskorn, \& H. Kn\"orrer, {Plane algebraic curves}, \rm Birkh\"auser, 1986.

\bibitem{CsHypPoint} 
G. Casnati, {\em On the rationality of moduli spaces of pointed hyperelliptic curves }, to appear in Rocky Mountain J. Math..

\bibitem{CsTrigPoint} 
G. Casnati,  {\em On the rationality of some moduli spaces related to pointed trigonal curves}, preprint.

\bibitem{C--F} 
G. Casnati, \& C. Fontanari, {\em On the rationality of moduli spaces of pointed curves}, J. London Math. Soc.  \bf 75 \rm (2007), 582--596.

\bibitem{C--R}  
M. Chang, \& Z. Ran, {\em Unirationality of the moduli spaces of curves of genus $11$, $13$ (and $12$)}, Invent. Math. \bf 76 \rm(1984),  41--54.

\bibitem{C--K} 
M. Coppens, \& T. Kato, {\em The gonality of smooth curves with plane models}, Manuscripta Math. \bf 70 \rm (1990), 5--25.

\bibitem{C--M} 
M. Coppens, \& G. Martens, {\em Linear series on $4$--gonal curves}, Math. Nachr. \bf 213 \rm (2000), 35--55.

\bibitem{E--H} 
D. Eisenbud, \& J. Harris, {\em The Kodaira dimension of the moduli space of curves of genus $\ge23$}, Invent. Math. \bf 90 \rm (1987), 359--387.

\bibitem{Fa1} 
G. Farkas, {\em The geometry of the moduli space of curves of genus $23$}, Math. Ann. \bf 318 \rm (2000),  43--65.

\bibitem{Fa2} 
G. Farkas, {\em Syzygies of curves and the effective cone of $\overline{\cM}_g$}, Duke Math. J. \bf 135 \rm (2006), 53--98.

\bibitem{Fa3} 
G. Farkas, {\em Koszul divisors on moduli spaces of curves}, Amer. J. Math. \bf 131 \rm (2009), 819--867.

\bibitem{Fa4} 
G. Farkas, {\em $\overline{\cM}_{22}$ is of general type}, \rm {http://www.mathematik.hu-berlin.de/\text{$\sim$}farkas/m22.pdf} .

\bibitem{Ga}
F.R. Gantmacher, \emph{The Theory of Matrices}, Vol. 2 (1959), AMS Chelsea Publishing Vol. \textbf{133} (2000)

\bibitem{Hr} 
J. Harris, {\em On the Kodaira dimension of the moduli space of curves, II}, Invent. Math. \bf 75 \rm (1984),  437--466.

\bibitem{H--M}
J. Harris, \& D. Mumford, {\em On the Kodaira dimension of the moduli space of curves}, Invent. Math. \bf 67 \rm (1982), 23--97.

\bibitem{Ha} 

R. Hartshorne, Algebraic geometry, \rm Springer-Verlag, 1977.

\bibitem{Ig}  
J. Igusa, {\em Arithmetic variety of moduli of genus two}, Ann. of Math. \bf 72 \rm (1960), 612--649.

\bibitem{Ka1} 
P.I. Katsylo, {\em Rationality of the orbit spaces of irreducible representations of the group $SL_2$}, Izv. Akad. Nauk SSSR, Ser. Mat \bf 47 \rm (1983), 26--36. English translation: Math. USSR Izv. \bf 22 \rm (1984), 23--32.

\bibitem{Ka5} 
P.I. Katsylo, {\em Rationality of the moduli spaces of hyperelliptic curves}, Izv. Akad. Nauk SSSR, Ser. Mat \bf 48 \rm (1984), 705--710. English translation:  Math. USSR Izv. \bf 25 \rm (1985), 45--50

\bibitem{Ka2} 
P.I. Katsylo, {\em The variety of moduli of curves of genus four is rational}, Dokl. Akad. Nauk SSSR \bf 290 \rm (1986), 1292--1294. English translation: Soviet Math. Dokl. \bf 34 \rm (1987), 388--389.

\bibitem{Ka3} 
P.I. Katsylo, {\em Rationality of the variety of moduli of curves of genus $5$}, Mat. Sb. Nov. Ser. \bf 182 \rm (1991), 457--464. English translation: Math. USSR-Sb \bf 72 \rm (1992), 439--445.

\bibitem{Ka4} 
P.I. Katsylo, {\em Rationality of the moduli variety of curves of genus $3$}, Comment. Math. Helvetici \bf  71 \rm (1996), 507--524.

\bibitem{Lg} 
A. Logan, {\em The Kodaira dimension of moduli spaces of curves with marked points}, Am. J. Math. \bf  125 \rm (2003),  105--138.

\bibitem{Ma}
S. Ma, {\em The rationality of the moduli spaces of trigonal curves of odd genus}, preprint arXiv:1012.0983.

\bibitem{Mi} 
J.C. Migliore, Introduction to liaison theory and deficiency modules, \rm Birkh\"auser, 1998.

\bibitem{Mum-Tata3}
Mumford, D., Nori, M. \& Norman, P., \emph{Tata Lectures on Theta III}, Progress in Mathematics \textbf{97}, Birkh\"auser (1991)

\bibitem{APopov78}
A.M. Popov,  \emph{Irreducible semisimple linear Lie groups with finite stationary subgroups of general position}, Functional Analysis and Its Applications, 1978, Vol. \textbf{12}, no. 2, 154Ð155

\bibitem{Sch1}
F.O. Schreyer, {\em Syzygies of canonical curves and special linear series}, Math. Ann. \bf 275 \rm (1986), 105--137.

\bibitem{Sn}
E. Sernesi, {\em Unirazionalit\ga a della variet\ga a dei moduli delle curve di genere dodici}, Ann. Scuola Norm. Sup. Pisa Cl. Sci.(4) \bf  8 \rm(1981),  405--439

\bibitem{Sr}
J.-P. Serre, \emph{Linear Representations of Finite Groups}, Springer GTM \textbf{42}, Springer-Verlag (1977)

\bibitem{SB1} 
N.I. Shepherd-Barron, {\em The rationality of certain spaces associated to trigonal curves}, Algebraic Geometry, Bowdoin 1985, Spencer J. Bloch ed., Proceedings of Symposia in Pure Mathematics, {\bf 46},    165--171, A.M.S., 1987.

\bibitem{SB2} 
N.I. Shepherd-Barron, {\em Invariant theory for $S_5$ and the rationality of $M_6$}, Compositio Math. \bf 70 \rm(1989),  13--25.

\bibitem{Ve}
A. Verra, {\em The unirationality of the moduli spaces of curves of genus 14 or lower}, Compositio Math. \bf 141 \rm(2005),  1425--1444.
\end{thebibliography}
\end{document}